

\let\chooseClass1   
\let\chooseClass3   

\ifx\chooseClass1
\documentclass[12pt,a4paper,reqno]{amsart}
\usepackage[cp1251]{inputenc}
\usepackage[T2A]{fontenc}

\setlength{\oddsidemargin}{4.6 mm} 
\setlength{\evensidemargin}{4.6 mm}
\setlength{\textwidth}{170 mm}     
\newlength{\Totalheight}
\setlength{\Totalheight}{237 mm}
\addtolength{\Totalheight}{-\headheight}
\addtolength{\Totalheight}{-\headsep}
\addtolength{\Totalheight}{-1.5 mm}
\setlength{\textheight}{\Totalheight}
\setlength{\topmargin}{4.6 mm}     
\fi

\ifx\chooseClass2
\IfFileExists{extarticle.cls}{
\documentclass[14pt]{extarticle}
\emergencystretch 7 pt
\setlength{\oddsidemargin}{-17 mm} 
\setlength{\textwidth}{192 mm}     
\setlength{\textheight}{246 mm}    
\setlength{\topmargin}{-31 mm}     
}{
\documentclass[12pt]{article}
\textheight = 8.5in
\textwidth 6.3in
\setlength{\oddsidemargin}{0mm}
\setlength{\topmargin}{-15 mm}
}{}
\fi

\ifx\chooseClass3
\documentclass[12pt]{article}
\emergencystretch 7 pt
\textheight = 8.5in
\textwidth 6.3in
\setlength{\oddsidemargin}{0mm}
\setlength{\topmargin}{-15 mm}
\fi

\ifx\chooseClass1
	\else
\makeatletter
\def\@seccntformat#1{\csname the#1\endcsname.\quad}
\renewcommand\section{\@startsection {section}{1}{\z@}%
                                   {-3.5ex \@plus -1ex \@minus -.2ex}%
                                   {2.3ex \@plus.2ex}%
                                   {\normalfont\large\bfseries}}
\renewcommand\subsection{\@startsection{subsection}{2}{\z@}%
                        {3.25ex plus 1ex minus .2ex}{-.5em}%
                        {\normalfont\normalsize\bfseries}}
\renewcommand\subsubsection{\@startsection{subsubsection}{3}{\z@}%
                        {3.25ex plus 1ex minus .2ex}{-.5em}%
                        {\normalfont\normalsize\it}}
\@addtoreset{equation}{section}
\makeatother
\fi

\usepackage{ifpdf}
\ifpdf
    \IfFileExists{hyperref.sty}{\usepackage[pdftex]{hyperref}}{}
\else
    \IfFileExists{hyperref.sty}{\usepackage[hypertex]{hyperref}}{}
\fi

\usepackage{amsthm}
\newtheoremstyle{boldhead}
{\topsep}
{\topsep}
{\slshape}
{}
{\bfseries}
{.}
{ }
{\thmname{#1}\thmnumber{ #2}\thmnote{ (#3)}}

\newtheoremstyle{boldremark}
{\topsep}
{\topsep}
{\upshape}
{}
{\bfseries}
{.}
{ }
{\thmname{#1}\thmnumber{ #2}\thmnote{ (#3)}}

\swapnumbers

\theoremstyle{boldhead}
\newtheorem{theorem}[subsection]{Theorem}
\newtheorem{corollary}[subsection]{Corollary}
\newtheorem{lemma}[subsection]{Lemma}
\newtheorem{proposition}[subsection]{Proposition}

\theoremstyle{boldremark}

\newtheorem{example}[subsection]{Example}

\newtheorem{remark}[subsection]{Remark}

\usepackage{    amsmath,
                amsfonts,
                amssymb,
}

\usepackage[matha,mathx]{mathabx}

\numberwithin{equation}{section}

\IfFileExists{url.sty}{\usepackage{url}}{}
\providecommand{\url}[1]{{\tt #1}}

\IfFileExists{euscript.sty}{\usepackage[mathcal]{euscript}}{}

\message{You may get an up to date version of Paul Taylor's}
\message{diagrams.sty from his home web page.}
\message{Without diagrams you can not process this file!}
\usepackage{diagrams}
\diagramstyle[height=2em,balance,righteqno,PostScript=dvips,nohug]

\newarrowhead{pto}{\mkern-6mu\raise.162em\hbox{$\sss\succ$}\mkern6mu}%
{\mkern6mu\raise.162em\hbox{$\sss\prec$}\mkern-6mu}%
{\sss\curlyvee}{\sss\curlywedge}

\newarrowtail{mfrom}{\mkern6mu\lower.018em\hbox{$\sss\prec$}\mkern-6mu}%
{\mkern-6mu\lower.018em\hbox{$\sss\succ$}\mkern6mu}%
{\sss\curlywedge}{\sss\curlyvee}

\newarrowtail{sscurlyvee}{\raise.18ex\hbox{$\sss\succ$}}%
{\mkern-.5mu\raise.18ex\hbox{$\sss\prec$}\mkern.4mu}%
{\sss\curlyvee}{\sss\curlywedge}

\newarrow{Cof}{sscurlyvee}---{->}
\newarrow{DashTo}{}{dash}{}{dash}{->}
\newarrow{Epi}----{triangle}
\newarrow{Fib}----{->>}
\newarrow{FromTo}{mfrom}==={pto}
\newarrow{Gyrlyga}{boldhook}----
\newarrow{MapsTo}{mapsto}---{->}
\newarrow{Mono}{boldhook}---{->}
\newarrow{TTo}----{->}
\newarrow{Twoar}===={=>}

\newcommand\NN{{\mathbb N}}
\newcommand\ZZ{{\mathbb Z}}

\newcommand{\cc}{{\mathcal C}}

\newcommand{\cw}{{\mathcal W}}

\newcommand{\bull}{{\scriptscriptstyle\bullet}}
\newcommand{\bulle}{{\scriptstyle\bullet}}
\newcommand{\colim}{\qopname\relax m{colim}}
\newcommand{\Com}{{{\mathsf C}_\kk}}

\newcommand{\sS}[2]{\vphantom{#2}#1 #2}
\newcommand{\n}[1]{\nobreakdash-\hspace{0pt}}
\newcommand{\ainf}[1]{$A_\infty$\nobreakdash-\hspace{0pt}}
\newcommand{\ainfm}[1]{$\mathrm{A}_\infty$\nobreakdash-\hspace{0pt}}

\let\eps\varepsilon
\let\ge\geqslant
\let\kk\Bbbk

\let\rto\xrightarrow
\let\sss\scriptstyle
\let\tens\otimes
\let\ttt\textstyle

\newcommand\KK{\mathbb{K}}

\newcommand\cL{{\mathcal L}}

\newcommand\cR{{\mathcal R}}

\newcommand\cW{{\mathcal W}}

\newcommand{\modul}{\textup{-mod}}

\DeclareMathOperator\Cone{Cone}

\DeclareMathOperator\dg{\mathbf{dg}}
\DeclareMathOperator\udg{\underline{\dg}}

\DeclareMathOperator\gr{\mathbf{gr}}

\DeclareMathOperator\id{id}

\DeclareMathOperator\inj{in}

\DeclareMathOperator\Mor{Mor}

\DeclareMathOperator\Ob{Ob}

\DeclareMathOperator\pr{pr}

\DeclareMathOperator\Set{\mathcal Set}

\newcommand{\exaref}[1]{Example~\ref{#1}}

\newcommand{\lemref}[1]{Lemma~\ref{#1}}
\newcommand{\propref}[1]{Proposition~\ref{#1}}
\newcommand{\remref}[1]{Remark~\ref{#1}}

\newcommand{\thmref}[1]{Theorem~\ref{#1}}

\newlength{\texthigh}
\setlength{\texthigh}{\textheight}
\addtolength{\texthigh}{-0.12pt}


\begin{document}
\bibliographystyle{amsalpha}

\ifx\chooseClass1
УДК 512.58\\[3mm]

{\bf\noindent
МОДЕЛЬНА СТРУКТУРА НА КАТЕГОРІЯХ\\[1mm]
ПОВ’ЯЗАНИХ З КАТЕГОРІЯМИ КОМПЛЕКСІВ
}

\bigskip

\title[A model structure on categories related to categories of complexes]{A model structure on categories\\
related to categories of complexes}
	\else
\title{A model structure on categories\\
related to categories of complexes}
\fi

\ifx\chooseClass1
\author{V. Lyubashenko}
	\else
\author{Volodymyr Lyubashenko}
\date{Institute of Mathematics NASU, Kyiv, Ukraine}
\maketitle
\fi

\begin{abstract}
We prove a theorem of Hinich type on existence of a model structure on a category related by an adjunction to the category of differential graded modules over a graded commutative ring.

    \ifx\chooseClass1
    \bigskip
    
Доведена теорема типу теореми Хініча про існування модельної структури на категорії пов’язаній спряженістю з категорією диференціально-градуйованих модулів над градуйованим комутативним кільцем.
    \fi
\end{abstract}

\ifx\chooseClass1
\maketitle

Institute of Mathematics NASU, Kyiv, Ukraine

\bigskip

Володимир Васильович Любашенко

\bigskip

Інститут математики\\
Національної академії наук України\\
вул. Терещенківська, 3\\
Київ-4, 01601 МСП

\bigskip

мешкає за адресою:\\
вул. Майорова 3, кв. 86,\\
Київ, 04201, Україна

\bigskip

Телефон: 430 9693~(дом), 235 7819~(сл)

\bigskip

lub@imath.kiev.ua

\newpage
\fi

\allowdisplaybreaks[1]

\section{Introduction}
Hinich proved in \cite{Hinich:q-alg/9702015} a theorem on existence of a model structure on a category related by an adjunction to the category of complexes.
In this article we give a detailed proof of a theorem of similar kind.
The two theorems differ at least in two points.
First, Hinich works with $\dg$\n-modules over a (commutative) ring, and we consider differential graded modules over a \emph{graded} commutative ring $\kk$.
Second, in the proof Hinich introduces certain morphisms which he calls elementary trivial cofibrations and shows that any trivial cofibration is a retract of countable composition of elementary ones.
We show that a trivial cofibration is a retract of an elementary trivial cofibration in our sense.

We apply our theorem to proving that categories of bi- or poly-modules over non-symmetric operads have a model structure \cite{Lyu-Ainf-Operad,Lyu-A8-several-entries}.
For modules over operads a model structure was constructed by Harper \cite[Theorem~1.7]{MR2593672}.
Since Hinich's article \cite{Hinich:q-alg/9702015} a plenty of results appeared in which given a (monoidal) model category one produces a model structure on another category related to the first category by an adjunction \cite[Section~2.5]{math.AT/0206094}, on category of monoids \cite[Theorem~3.1]{math.AT/9801082} or on the category of operads \cite[Remark~2]{math/0101102}, \cite[Theorem~1.1]{MR2821434}.
Clearly, in this approach one must have a model category to begin with.
The category of differential (unbounded) graded $\kk^0$\n-modules has a projective model structure for a commutative ring $\kk^0$ \cite{math.KT/0011216}.
The same result for \emph{graded commutative} ring $\kk$ has to be deduced from the case of commutative ring $\kk^0$ along the lines of \cite{math.AT/0206094}.
After that one has to prove that $\dg\text-\kk$-mod is a monoidal model category, which requires detailed information on cofibrations.
Such information is provided e.g. by the proof of Hinich type theorem: any cofibration is a retract of a countable composition of elementary cofibrations (of a concrete form).
Thus, a technical work does not seem to be avoidable in any approach.
One more reason to follow the Hinich's approach is pedagogical: it can be explained to students in detail as well as in examples.

\subsection{Notations and conventions.}
In this article `graded' means $\ZZ$\n-graded.
Let $\kk$ be a graded commutative ring (equipped with zero differential).
By \(\gr=\gr_\kk=\gr\text-\kk\modul\) we denote the closed category of $\ZZ$\n-graded $\kk$\n-modules with $\kk$\n-linear homomorphisms of degree 0.
Thus an object of $\gr$ is \(X=(X^m)^{m\in\ZZ}\).
Symmetry in the monoidal category of graded $\kk$\n-modules is chosen as \(c(x\tens y)=(-1)^{ml}y\tens x\) for \(x\in X^m\), \(y\in Y^l\).

The abelian category \(\dg=\dg\text-\kk\modul\) is the closed category of differential $\ZZ$\n-graded $\kk$\n-modules with chain $\kk$\n-linear homomorphisms.
Monomorphisms and epimorphisms of $\dg$ are componentwise injections and surjections.
A quasi-isomorphism \(M\to N\in\dg\) is a chain $\kk$\n-linear homomorphism inducing an isomorphism in homology.
For $a\in\ZZ$ the shift functor is defined by \([a]:\dg\to\dg\), \(M\mapsto M[a]\), \(M[a]^z=M^{z+a}\).
The shift functor extends componentwise to $\dg^S$ for any set $S$.

Denote by \(\sigma^a:M\to M[a]\) the ``identity map'' of degree \(\deg\sigma^a=-a\).
Write elements of $M[a]$ as \(m\sigma^a\).
When \(f:V\to X\) is a homogeneous map of certain degree, the map \(f[a]:V[a]\to X[a]\) is defined as \(f[a]=(-1)^{fa}\sigma^{-a}f\sigma^a\).
In particular, the differential \(d:M\to M\) of degree~1 in a $\dg$\n-module $M$ induces the differential \(d[a]=(-1)^a\sigma^{-a}d\sigma^a:M[a]\to M[a]\) in $M[a]$.
The degree 0 isomorphisms \(\sigma^{-a}\cdot(\sigma^a\tens1):(V\tens W)[a]\to(V[a])\tens W\),
\((v\tens w)\sigma^a\mapsto(-1)^{wa}v\sigma^a\tens w\), and \(\sigma^{-a}\cdot(1\tens\sigma^a):(V\tens W)[a]\to V\tens(W[a])\), \((v\tens w)\sigma^a\mapsto v\tens w\sigma^a\), are graded natural.
This means that for arbitrary homogeneous maps \(f:V\to X\), \(g:W\to Y\) the following squares commute:
\begin{diagram}[w=5em]
(V[a])\tens W &\lTTo^{\sigma^{-a}\cdot(\sigma^a\tens1)}_\sim &(V\tens W)[a]
&\rTTo^{\sigma^{-a}\cdot(1\tens\sigma^a)}_\sim &V\tens(W[a])
\\
\dTTo<{(f[a])\tens g} &&\dTTo<{(f\tens g)[a]} &&\dTTo>{f\tens(g[a])}
\\
(X[a])\tens Y &\lTTo^{\sigma^{-a}\cdot(\sigma^a\tens1)}_\sim &(X\tens Y)[a]
&\rTTo^{\sigma^{-a}\cdot(1\tens\sigma^a)}_\sim &X\tens(Y[a])
\end{diagram}
Actually, the second isomorphism is ``more natural'' than the first one, not only because it does not have a sign, but also because it suits better the right operator system of notations, accepted in this paper.
In the following we always identify \((V\tens W)[a]\) with \(V\tens(W[a])\) via \(\sigma^{-a}\cdot(1\tens\sigma^a)\).

Assume that \(\alpha:M\to N\in\dg\).
Denote by \(\Cone\alpha=(M[1]\oplus N,d_{\Cone})\in\Ob\dg\) the graded $\kk$\n-module with the differential
\[ d_{\Cone} =
\begin{pmatrix}
d_M[1] & \sigma^{-1}\alpha \\
0 & d_N
\end{pmatrix}
=
\begin{pmatrix}
-\sigma^{-1}d_M\sigma & \sigma^{-1}\alpha \\
0 & d_N
\end{pmatrix}.
\]

The following result generalizes a theorem of Hinich \cite[Section~2.2]{Hinich:q-alg/9702015}.

\begin{theorem}\label{thm-Hinich-model-category}
Suppose that $S$ is a set, a category $\cc$ is complete and cocomplete and \(F:\dg^S\rightleftarrows\cc:U\) is an adjunction.
Assume that $U$ preserves filtering colimits.
For any $x\in S$ consider the object $\KK_x$ of $\dg^S$, \(\KK_x(x)=\Cone(\id_\kk)\), \(\KK_x(y)=0\) for $y\ne x$. 
Assume that the chain map \(U(\inj_2):UA\to U(F(\KK_x[p])\sqcup A)\) is a quasi-isomorphism for all objects $A$ of $\cc$ and all $x\in S$, $p\in\ZZ$.
Equip $\cc$ with the classes of weak equivalences (resp. fibrations) consisting of morphisms $f$ of $\cc$ such that $Uf$ is a quasi-isomorphism (resp. an epimorphism).
Then the category $\cc$ is a model category.
\end{theorem}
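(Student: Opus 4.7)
The plan is to transfer the model structure from $\dg^S$ to $\cc$ along the adjunction $F \dashv U$, defining cofibrations in $\cc$ by the left lifting property against trivial fibrations. The 2-out-of-3 axiom for weak equivalences and the stability of all three classes under retracts follow at once from the corresponding facts for quasi-isomorphisms and epimorphisms of complexes (together with the fact that $U$ preserves retracts). Completeness and cocompleteness are hypotheses. What remains are the two factorization axioms and the statement that every trivial cofibration has the left lifting property against every fibration.

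For the factorizations I would apply Quillen's small object argument to two sets in $\cc$: a set $I$ obtained by applying $F$ to the standard generating cofibrations of $\dg^S$ (inclusions of the ``sphere'' $\kk$ concentrated in component $x$ and degree $n$ into the ``disk'' $\KK_x[p]$), and the set $J = \{F(0 \to \KK_x[p]) : x \in S,\ p \in \ZZ\}$. Because $U$ preserves filtered colimits and the objects $\KK_x[p]$ are small in $\dg^S$, the domains of $I$ and $J$ are small in $\cc$, so the small object argument produces, for every $f \in \cc$, functorial factorizations $f = q \cdot i$ with $i$ a countable composition of pushouts of maps in $I$ (respectively $J$) and $q$ having the right lifting property against $I$ (respectively $J$). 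By adjunction this means $Uq$ is a trivial fibration (respectively a fibration) in $\dg^S$.

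The central use of the hypothesis on $U(\inj_2)$ is then to show that every pushout along a map in $J$ is a weak equivalence: the pushout of $F(0 \to \KK_x[p])$ along any $0 \to A$ is exactly the inclusion $A \to F(\KK_x[p]) \sqcup A$, which is sent by $U$ to a quasi-isomorphism by assumption. A countable composition of such pushouts remains a weak equivalence because $U$ commutes with filtered colimits and filtered colimits of quasi-isomorphisms in $\dg^S$ are quasi-isomorphisms (homology commutes with filtered colimits). This shows the $J$-factorization gives a trivial cofibration followed by a fibration, while the $I$-factorization gives a cofibration followed by a trivial fibration by the adjunction.

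The main obstacle is the remaining lifting axiom, that every trivial cofibration $f \colon A \to B$ has the left lifting property against every fibration. The standard retract trick reduces this to showing that $f$ is a retract of a $J$\nobreakdash-cell map: factor $f = q \cdot j$ with $j$ a $J$-cell map and $q$ a fibration; since both $f$ and $j$ are weak equivalences, so is $q$ by 2-out-of-3, hence $q$ is a trivial fibration, and the square $qj = f \cdot \id_B$ admits a diagonal lift by the definition of cofibration, exhibiting $f$ as a retract of $j$. This is the step where the sharpening advertised in the introduction is needed: the goal is to exhibit $f$ as a retract of a \emph{single} elementary trivial cofibration of the form $A \to F\bigl(\bigsqcup_\alpha \KK_{x_\alpha}[p_\alpha]\bigr) \sqcup A$ rather than of a countable composition of such. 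I expect the principal difficulty to lie in this absorption argument, which should proceed by collecting all successive pushouts produced by the small object argument into a single coproduct (using that $F$ is a left adjoint and that the required quasi-isomorphism property persists for arbitrary coproducts of $\KK_x[p]$'s, again via the filtered-colimit hypothesis) and then transporting the retract structure through this comparison.
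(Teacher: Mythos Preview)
Your overall strategy is correct and complete as stated through the retract argument; the small object argument transfers the model structure exactly as you describe, and the hypothesis on $U(\inj_2)$ together with preservation of filtered colimits is precisely what makes $J$-cell maps weak equivalences. However, your final paragraph misreads what the theorem requires: the ``sharpening'' you allude to is \emph{not} needed to finish the proof. Once you have factored a trivial cofibration $f$ as a $J$-cell map $j$ followed by a fibration $q$, the retract argument you already wrote down exhibits $f$ as a retract of $j$, and since $J$-cell maps have the left lifting property against $J^\perp = \cR_f$ by construction, you are done. No ``absorption'' of the transfinite composition into a single pushout is necessary; that refinement is an additional structural observation the paper makes, not an ingredient of the proof of the model axioms.

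That said, your route is genuinely different from the paper's. The paper never invokes the small object argument. For the (trivial cofibration, fibration) factorization it performs a \emph{single} attachment: given $f\colon X\to Y$, set $N = Y^\#\kk$ (the free $\kk$-module on the underlying graded set of $Y^\#$), put $M[1]=\Cone 1_{N[-1]}$, and factor $f$ as $X \xrightarrow{\bar\jmath} F(M[1])\sqcup X \to Y$, where surjectivity of the second map is immediate from the construction. For the (cofibration, trivial fibration) factorization the paper builds an explicit tower $D_0\to D_1\to\cdots$ in which each step freely adjoins variables indexed by the cycles of $\Cone(q_n^\#[-1])$, so that every cycle becomes a boundary at the next stage; acyclicity of the colimit cone then gives the trivial fibration. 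The lifting axiom is handled by checking directly (via adjunction and freeness) that standard trivial cofibrations lift against fibrations, then applying the retract argument. What the paper's approach buys is an explicit description of (trivial) cofibrations as retracts of concretely described standard (trivial) cofibrations, useful for the applications to operadic bimodules; what your approach buys is brevity and alignment with the general transfer machinery.
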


\section{Proof of existence of model structure}
This section is devoted to proof of \thmref{thm-Hinich-model-category}, whose hypotheses we now assume.
The proof follows that of Hinich's theorem \cite[Section~2.2]{Hinich:q-alg/9702015} ideologically but not in details.
Constructions used in the proof describe cofibrations and trivial cofibrations in $\cc$. 

Denote the functor $U$ also by $-^\#$, \(UX=X^\#\) for $X\in\Ob\cc$ or $X\in\Mor\cc$.
Let \(\eps:FUA\to A\) be the adjunction counit and let \(\eta:M\to UFM\) be the adjunction unit.
The adjunction bijection is given by mutually inverse maps
\begin{align*}
(l:FM\to A) &\rMapsTo l^t =\bigl( M \rto\eta (FM)^\# \rto{l^\#} A^\# \bigr),
\\
\sS{^t}x =\bigl( FM \rto{Fx} F(A^\#) \rto\eps A \bigr) &\lMapsTo (x:M\to A^\#).
\end{align*}

Define three classes of morphisms in $\cc$:
\begin{align*}
\cW &=\{ f\in\Mor\cc \mid \forall\,x\in S\ f^\#(x) \textup{ is a quasi-isomorphism} \},
\\
\cR_f &=\{ f\in\Mor\cc \mid \forall\,x\in S\ \forall\,z\in\ZZ\ f^\#(x)^z \textup{ is surjective} \},
\\
\cL_c &= \sS{^\perp}\cR_{tf} \textup{ consists of maps $f\in\Mor\cc$ with the left lifting property}
\\
&\qquad \textup{with respect to all morphisms from } \cR_{tf}=\cW\cap\cR_f.
\end{align*}
We are going to prove that they are weak equivalences, fibrations and cofibrations of a certain model structure on $\cc$.

Let \(M\in\Ob\dg^S\), \(A\in\Ob\cc\), \(\alpha:M\to A^\#\in\dg^S\).
Denote by \(C=\Cone\alpha=(M[1]\oplus UA,d_{\Cone})\in\Ob\dg^S\) the cone taken pointwise, that is, for any $x\in S$ the complex \(C(x)=\Cone\bigl(\alpha(x):M(x)\to(UA)(x)\bigr)\) is the usual cone.
Denote by \(\bar\imath=\inj_2:UA\to C\) the obvious embedding.
Following Hinich \cite[Section~2.2.2]{Hinich:q-alg/9702015} define an object \(A\langle M,\alpha\rangle\in\Ob\cc\) as the pushout
\begin{diagram}[w=4em]
FU(A) & \rTTo^\eps &A
\\
\dTTo<{F\bar\imath} &&\dTTo>{\bar\jmath}
\\
FC &\rTTo^g &\NWpbk A\langle M,\alpha\rangle
\end{diagram}
Introduce a functor \(h_{A,\alpha}:\cc\to\Set\):
\[ h_{A,\alpha}(B) =\bigl\{ (f,t)\in \cc(A,B) \times \udg^S(M,B^\#)^{-1} \mid (t)d\equiv td_{B^\#} +d_Mt =\bigl( M \rto\alpha A^\# \rto{f^\#} B^\# \bigr) \bigr\}.
\]

\begin{lemma}\label{lem-AM-alpha-represent-hA-alpha}
The object \(D=A\langle M,\alpha\rangle\) and the element \((\bar\jmath,\theta)\in h_{A,\alpha}(D)\) represent the functor \(h_{A,\alpha}\), where
\[ \theta =\bigl( M \rto\sigma M[1] \rTTo^{\inj_1} C \rto\eta UFC \rto{Ug} UD \bigr).
\]
That is, the natural in $B$ transformation \(\psi_B:\cc(D,B)\to h_{A,\alpha}(B)\), \(1_D\mapsto(\bar\jmath,\theta)\), is bijective.
\end{lemma}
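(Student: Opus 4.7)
The strategy is to chain together two natural-in-$B$ bijections: the universal property of the pushout defining $D$, and the adjunction $F \dashv U$ applied at the $FC$-corner. By the pushout, a morphism $\phi: D \to B$ corresponds to a pair $(f, g') \in \cc(A,B) \times \cc(FC,B)$ satisfying $f \circ \eps_A = g' \circ F\bar\imath$. Applying $F \dashv U$ replaces $g': FC \to B$ by its mate $(g')^t: C \to B^\#$ in $\dg^S$; by naturality of the adjunction together with the triangle identity $\eps^\# \circ \eta = \id_{UA}$, the condition $f \circ \eps_A = g' \circ F\bar\imath$ translates to $(g')^t \circ \bar\imath = f^\#$. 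Hence maps $\phi: D \to B$ are in bijection with pairs $(f, h)$ where $f \in \cc(A,B)$ and $h: C \to B^\#$ is a chain map in $\dg^S$ whose restriction to $UA$ is $f^\#$.

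Next, since $C = M[1] \oplus UA$ in $\gr^S$ and $\bar\imath = \inj_2$, fixing the $UA$-component of $h$ to be $f^\#$ means that $h$ is determined by its degree-$0$ restriction $\tilde t: M[1] \to B^\#$, equivalently by the degree $-1$ map $t = \tilde t \circ \sigma: M \to B^\#$. Using the explicit cone differential given just before the lemma, and the identity $d_M[1] \circ \sigma = -\sigma \circ d_M$, the chain-map condition on $h$ along $M[1]$ (its restriction to $UA$ being automatically a chain map, as $f^\#$ is) reduces precisely to $t d_{B^\#} + d_M t = f^\# \circ \alpha$, which is the defining condition of $h_{A,\alpha}(B)$. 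Composing the two bijections yields a natural-in-$B$ bijection $\psi_B: \cc(D,B) \to h_{A,\alpha}(B)$ given concretely by $\phi \mapsto (\phi \circ \bar\jmath,\, \phi^\# \circ \theta)$; evaluating at $\phi = 1_D$ produces exactly $(\bar\jmath, \theta)$ with $\theta$ the composite specified in the statement, since unwinding the mate of $g: FC \to D$ at the $M[1]$-summand gives $\bigl(M \rto\sigma M[1] \rTTo^{\inj_1} C \rto\eta UFC \rto{Ug} UD\bigr)$.

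The one step requiring real care is the translation of the chain-map condition through the cone decomposition: one must track signs so that the induced differential on the degree $-1$ graded hom $\udg^S(M,B^\#)^{-1}$ agrees with the formula $(t)d = t d_{B^\#} + d_M t$ used in the definition of $h_{A,\alpha}$, rather than its sign-twisted variant. This is a direct but bookkeeping-heavy calculation using the paper's conventions $\deg\sigma = -1$, $d_M[1] = -\sigma^{-1} d_M \sigma$, and the Koszul rule for composing graded maps; no deeper ingredient is needed, and naturality in $B$ is immediate from the functoriality of the pushout and the adjunction.
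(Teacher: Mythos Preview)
Your proof is correct and follows essentially the same route as the paper's. The paper organizes the argument slightly differently---it first verifies \((\bar\jmath,\theta)\in h_{A,\alpha}(D)\), then defines \(\psi_B(k)=(\bar\jmath\cdot k,\theta\cdot k^\#)\) via Yoneda and checks injectivity and surjectivity separately (the surjectivity step is exactly your construction of the chain map \(x:C\to B^\#\) from \((f,t)\), followed by the pushout factorization)---whereas you present the same content as a composite of two natural bijections (pushout, then adjunction plus cone decomposition). The ingredients and the sign computation reducing the chain-map condition on \(C\) to \((t)d=\alpha\cdot f^\#\) are identical; only the packaging differs.
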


\begin{proof}
The boundary of degree $-1$ map \(h=\bigl(M\rto\sigma M[1]\rTTo^{\inj_1} C\bigr)\) is \((h)d=hd_C+d_Mh=\alpha\cdot\bar\imath\).
Therefore, \((\theta)d\) is the composition along the bottom path in the diagram
\begin{diagram}
M &\rTTo^\alpha &UA &\rTTo^\eta &UFUA &\rTTo^{U\eps} &UA
\\
&&\dTTo<{\bar\imath} &= &\dTTo<{UF\bar\imath} &= &\dTTo>{U\bar\jmath}
\\
&&C &\rTTo^\eta &UFC &\rTTo^{Ug} &UD
\end{diagram}
which equals to the top path, that is, to \(\alpha\cdot U\bar\jmath\).
Therefore, \((\bar\jmath,\theta)\in h_{A,\alpha}(D)\).
By the Yoneda lemma the natural transformation $\psi_B$ takes a morphism \(k:D\to B\) of $\cc$ to
\begin{equation}
h_{A,\alpha}(k)(\bar\jmath,\theta) =\bigl( A \rto{\bar\jmath} D \rto k B, M \rto h C \rto\eta (FC)^\# \rto{g^\#} D^\# \rto{k^\#} B^\# \bigr).
\label{eq-hA-alpha(k)(j-theta)}
\end{equation}

Let us prove injectivity of $\psi_B$.
Let \(k_1,k_2:D\to B\) satisfy
\[ (f_1,t_1) \equiv h_{A,\alpha}(k_1)(\bar\jmath,\theta) = h_{A,\alpha}(k_2)(\bar\jmath,\theta) \equiv (f_2,t_2).
\]
Then
\[ \bigl( M[1] \rTTo^{\inj_1} C \rto\eta (FC)^\# \rto{g^\#} D^\# \rto{k_p^\#} B^\# \bigr) =\sigma^{-1}t_p
\]
does not depend on $p=1,2$.
On the other summand of $C$ we also have that
\[ \bigl( A^\# \rto{\bar\imath} C \rto\eta (FC)^\# \rto{g^\#} D^\# \rto{k_p^\#} B^\# \bigr) =\bigl( A^\# \rto{\bar\jmath^\#} D^\# \rto{k_p^\#} B^\# \bigr) =f_p^\#
\]
does not depend on $p=1,2$.
Therefore,
\[ l_p^t =\bigl( C \rto\eta (FC)^\# \rto{g^\#} D^\# \rto{k_p^\#} B^\# \bigr)
\]
also does not depend on $p=1,2$. Their adjuncts \(l_p=\bigl(FC\rto g D\rto{k_p} B \bigr)\) must not depend on $p$ either.
By assumption
\[ \bigl( A \rto{\bar\jmath} D \rto{k_1} B \bigr) =f_1 =f_2 =\bigl( A \rto{\bar\jmath} D \rto{k_2} B \bigr).
\]
The pushout property of $D$ allows only one morphism $D\to B$ with such properties, hence, $k_1=k_2$.

Let us prove surjectivity of $\psi_B$.
Given an element \((f:A\to B,t:M\to B^\#)\in h_{A,\alpha}(B)\) we construct a degree $0$ map \(x:C\to B^\#\)
\[ x= \biggl(\begin{matrix}M[1] \rto{\sigma^{-1}} M \rto t B^\# \\ A^\# \rto{f^\#} B^\# \end{matrix}\biggr).
\]
One easily checks that $x$ is a chain map, $x\in\dg^S$.
Its adjunct is denoted
\[ l =\sS{^t}x =\bigl(FC \rto{Fx} F(B^\#) \rto\eps B \bigr).
\]
Since \(\bar\imath\cdot x=f^\#:A^\#\to B^\#\), we have
\[ F\bar\imath\cdot l =\bigl(F(A^\#) \rTTo^{F(f^\#)} F(B^\#) \rto\eps B \bigr) =\eps\cdot f.
\]
By definition of pushout $D$ there exists a unique morphism \(k:D\to B\in\cc\) such that \(f=\bar\jmath\cdot k\), \(l=g\cdot k\).
Hence,
\begin{gather*}
x =l^t =\bigl( C \rto\eta (FC)^\# \rto{l^\#} B^\# \bigr),
\\
t =\bigl( M \rto\sigma M[1] \rTTo^{\inj_1} C \rto x B^\# \bigr) =\bigl( M \rto\sigma M[1] \rTTo^{\inj_1} C \rto\eta (FC)^\# \rto{g^\#} D^\# \rto{k^\#} B^\# \bigr).
\end{gather*}
Therefore, \(\psi_B(k)=(f,t)\) and $\psi_B$ is bijective.
\end{proof}

\begin{corollary}
The map \(\bigl(M\rto\alpha A^\#\rto{\bar\jmath^\#} A\langle M,\alpha\rangle^\#\bigr)=(\theta)d\) is null-homotopic.
If $d_M=0$, then for any cycle $m\in ZM$ the cycle \(m\alpha\in ZA^\#\) is taken by \(\bar\jmath^\#\) to the boundary of the element \(m\theta\in A\langle M,\alpha\rangle^\#\).
\end{corollary}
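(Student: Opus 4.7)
The plan is to observe that this corollary is an essentially direct consequence of \lemref{lem-AM-alpha-represent-hA-alpha}, obtained by unwinding the definition of the functor $h_{A,\alpha}$ on the element it was shown to represent. No new construction is needed; the two assertions are simply two readings of the identity $(\theta)d=\alpha\cdot\bar\jmath^\#$.

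First I would invoke \lemref{lem-AM-alpha-represent-hA-alpha} to recall that the pair $(\bar\jmath,\theta)$ lies in $h_{A,\alpha}(D)$ with $D=A\langle M,\alpha\rangle$. By the very definition of $h_{A,\alpha}$, membership in this set is the equation
\[
(\theta)d\;\equiv\;\theta\, d_{D^\#}+d_M\,\theta\;=\;\bigl(M\rto\alpha A^\#\rto{\bar\jmath^\#}D^\#\bigr).
\]
Since the left-hand side is, by definition, the total differential of the degree $-1$ map $\theta\colon M\to D^\#$ in the complex $\udg^S(M,D^\#)$, this identity exhibits $\theta$ as a chain homotopy from $0$ to $\alpha\cdot\bar\jmath^\#$. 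This gives the first assertion.

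For the second assertion, I would simply evaluate $(\theta)d=\alpha\cdot\bar\jmath^\#$ on a cycle $m\in ZM$, using $d_M=0$:
\[
(m\alpha)\bar\jmath^\#\;=\;m\cdot(\theta)d\;=\;(m\theta)d_{D^\#}+(d_M m)\theta\;=\;d_{D^\#}(m\theta).
\]
Thus $(m\alpha)\bar\jmath^\#$ is the boundary of $m\theta\in D^\#=A\langle M,\alpha\rangle^\#$, as claimed.

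The only point that might require a sentence of care is the sign/shift bookkeeping implicit in the formula $(\theta)d=\theta d_{D^\#}+d_Mt$, but this is already fixed by the definition of $h_{A,\alpha}$ given just above the lemma, and no further calculation is required. In short, there is no genuine obstacle: both statements are immediate corollaries of the identification of $(\bar\jmath,\theta)$ as the universal element representing $h_{A,\alpha}$.
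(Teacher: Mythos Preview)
Your proposal is correct and matches the paper's approach exactly: the paper states the corollary without a separate proof precisely because both assertions are immediate from the identity \((\theta)d=\alpha\cdot\bar\jmath^\#\), which was verified in the proof of \lemref{lem-AM-alpha-represent-hA-alpha} when showing \((\bar\jmath,\theta)\in h_{A,\alpha}(D)\). The only cosmetic point is that the paper uses right operator notation throughout, so your final line would be written \((m\alpha)\bar\jmath^\#=(m\theta)d_{D^\#}+(md_M)\theta=(m\theta)d_{D^\#}\).
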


Thus, when \(F:\dg^S\to\cc\) is the functor of constructing a free $\dg$\n-algebra of some kind, the maps $\bar\jmath$ are interpreted as ``adding variables to kill cycles''.

The following statement is well-known.

\begin{lemma}\label{lem-surjective-quasi-isomorphism}
Assume that \(g:U\to V\in\Com\) is a surjective quasi-isomorphism.
Then for any pair \((u,v)\), \(u\in U^{n+1}\), \(v\in V^n\), such that \(ud=0\), \(ug=vd\) there is an element \(w\in U^n\) such that \(wd=u\), \(wg=v\).
\end{lemma}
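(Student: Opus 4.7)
The plan is the standard ``kernel is acyclic'' argument for a surjective quasi-isomorphism. Since $g$ is surjective, its kernel $K=\Ker g$ fits into a short exact sequence $0\to K\to U\rto g V\to 0$ of complexes; the induced long exact sequence in homology together with the assumption that $g$ is a quasi-isomorphism forces $H^\bull(K)=0$, i.e.\ $K$ is acyclic.

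First I would use surjectivity of $g$ in degree $n$ to pick some $w_0\in U^n$ with $w_0 g=v$. Then I compute $(dw_0)g=d(w_0 g)=dv=ug$, so the element $u-dw_0\in U^{n+1}$ lies in $K^{n+1}$. It is also a cycle: $d(u-dw_0)=du-d^2w_0=0$ since $du=0$ by hypothesis. Because $K$ is acyclic, this cycle is a boundary, so there exists $k\in K^n$ with $dk=u-dw_0$.

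Finally set $w=w_0+k\in U^n$. Then $dw=dw_0+dk=dw_0+(u-dw_0)=u$, and $wg=w_0 g+kg=v+0=v$, as required.

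The only step that deserves a moment of thought is the acyclicity of $K$; everything else is pure diagram chase. There is no real obstacle, since the long exact sequence immediately gives $H^\bull(K)=0$ from the fact that $g$ induces an isomorphism on homology.
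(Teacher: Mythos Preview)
Your argument is correct. The acyclic-kernel trick works here without any hidden issues: since we are dealing with complexes of $\kk$\n-modules, the short exact sequence $0\to K\to U\to V\to 0$ does yield the usual long exact homology sequence, and the quasi-isomorphism hypothesis forces $H^\bull(K)=0$.

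The paper's proof reaches the same conclusion by a slightly different chase: it first uses that $g$ is a quasi-isomorphism in degree $n+1$ to find some $y\in U^n$ with $yd=u$, observes that $yg-v$ is a cycle in $V^n$, then uses the quasi-isomorphism in degree $n$ to lift its class to a cycle $z\in Z^nU$, and finally uses surjectivity of $g$ in degree $n-1$ to correct $y$ by $z$ and a boundary, obtaining $w=y-z+xd$. In other words, the paper fixes the condition $wd=u$ first and then adjusts to achieve $wg=v$, whereas you fix $wg=v$ first (via surjectivity) and then adjust inside the kernel to achieve $wd=u$. Your packaging via ``$K$ is acyclic'' is a bit slicker and uses the hypotheses once each; the paper's version is a bare-hands elementwise computation that never names the kernel. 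Both are standard and equally valid.

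A minor stylistic point: the paper uses right-operator notation throughout (so $wd$, $wg$, $vd$ rather than $dw$, $gw$, $dv$); you may want to rewrite your chase accordingly for consistency.
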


\begin{proof}
Vanishing of \(H^{n+1}(g)[u]=[gu]=0\) implies vanishing of the cohomology class \([u]=0\).
There is \(y\in U^n\) such that \(yd=u\).
Denote \(c=yg\in V^n\), then
\[ cd =ygd =ydg =ug =vd.
\]
Hence, $c-v$ is a cycle, and there is a cycle \(z\in Z^nU\) such that \([zg]=[c-v]\).
There is \(e\in V^{n-1}\) such that \(zg=c-v+ed\).
The element $e$ lifts to \(x\in U^{n-1}\) such that \(xg=e\).
Thus,
\[ yg =c =zg -xgd +v =(z -xd)g +v.
\]
Therefore, \(w=y-z+xd\) satisfies \(wg=v\) and \(wd=u\).
\end{proof}

We say that $M$ consists of free $\kk$\n-modules if for any $x\in S$ the graded $\kk$\n-module $M(x)$ is free -- isomorphic to \(\oplus_{a\in\ZZ}P^a\kk[a]\) for some graded set $P$ and $d_M=0$.

\begin{proposition}
Let $M$ consist of free $\kk$\n-modules, $d_M=0$, $A\in\Ob\cc$ and \(\alpha:M\to A^\#\in\dg^S\).
Then \(\bar\jmath:A\to A\langle M,\alpha\rangle\in\cL_c\).
\end{proposition}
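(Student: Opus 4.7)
The goal is to show that $\bar\jmath$ lifts on the left against any trivial fibration $g \colon U\to V$ in $\cc$. Concretely, given any commutative square
\begin{diagram}[w=4em]
A &\rTTo^f &U \\
\dTTo<{\bar\jmath} && \dTTo>g \\
A\langle M,\alpha\rangle &\rTTo^{k_V} &V
\end{diagram}
with $g^\#(x)$ a surjective quasi-isomorphism for every $x\in S$, I need to construct $k\colon A\langle M,\alpha\rangle\to U$ with $\bar\jmath\cdot k=f$ and $k\cdot g=k_V$. The plan is to rewrite this lifting problem through the representability statement of \lemref{lem-AM-alpha-represent-hA-alpha}, and then solve it generator by generator using \lemref{lem-surjective-quasi-isomorphism}.

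First, I translate. By \lemref{lem-AM-alpha-represent-hA-alpha} the map $k_V$ corresponds to a pair $(f\cdot g,\, t_V)\in h_{A,\alpha}(V)$, where $t_V\colon M\to V^\#$ is a degree $-1$ map in $\dg^S$ satisfying $t_V\cdot d_{V^\#}=\alpha\cdot(f\cdot g)^\#=\alpha\cdot f^\#\cdot g^\#$ (the summand $d_M t_V$ vanishes because $d_M=0$). Likewise, giving $k$ with $\bar\jmath\cdot k=f$ amounts, by the same lemma, to giving $(f,t)\in h_{A,\alpha}(U)$, i.e.\ a degree $-1$ map $t\colon M\to U^\#$ with
\[
t\cdot d_{U^\#}=\alpha\cdot f^\#.
\]
The condition $k\cdot g=k_V$ translates, via the Yoneda formula \eqref{eq-hA-alpha(k)(j-theta)} computing $\psi_B(k)$, into $t\cdot g^\#=t_V$. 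Thus constructing $k$ is equivalent to producing a degree $-1$ map $t\colon M\to U^\#$ with
\[
t\cdot g^\#=t_V, \qquad t\cdot d_{U^\#}=\alpha\cdot f^\#.
\]

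Second, I solve this lifting of $t_V$ generator by generator. By hypothesis, for each $x\in S$ the $\kk$\n-module $M(x)$ is free on a graded set $P_x$, so it suffices to define $t(x)$ on each homogeneous generator $p\in P_x^a\subset M(x)^a$ and extend $\kk$\n-linearly. For such a $p$, put
\[
u=p\cdot\alpha(x)\cdot f^\#(x)\in U(x)^a,\qquad v=p\cdot t_V(x)\in V(x)^{a-1}.
\]
Since $d_M=0$ and $\alpha$ is a chain map, $u\cdot d_{U(x)}=p\cdot\alpha\cdot d_{A^\#}\cdot f^\#=0$; and the identity $t_V\cdot d_{V^\#}=\alpha\cdot f^\#\cdot g^\#$ evaluated on $p$ gives $v\cdot d_{V(x)}=u\cdot g^\#(x)$. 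These are exactly the hypotheses of \lemref{lem-surjective-quasi-isomorphism} applied to the surjective quasi-isomorphism $g^\#(x)\colon U(x)\to V(x)$, so that lemma produces $w\in U(x)^{a-1}$ with $w\cdot d_{U(x)}=u$ and $w\cdot g^\#(x)=v$. Setting $p\cdot t(x)=w$ and extending $\kk$\n-linearly yields the required $t$.

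The only possible obstacle is the bookkeeping in the first step: one must check that the translation of $k\cdot g=k_V$ into $t\cdot g^\#=t_V$ is really what the Yoneda formula \eqref{eq-hA-alpha(k)(j-theta)} delivers, and that the pushout/adjunction data are compatible. Once that reduction is made, the construction of $t$ is forced componentwise by \lemref{lem-surjective-quasi-isomorphism}, no further input from the structure of $\cc$ or the functor $F$ being needed.
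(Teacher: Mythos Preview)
Your proposal is correct and follows essentially the same route as the paper: translate the lifting problem via the representing bijection of \lemref{lem-AM-alpha-represent-hA-alpha} into the search for a degree~$-1$ map $t\colon M\to U^\#$ with $(t)d=\alpha\cdot f^\#$ and $t\cdot g^\#=t_V$, then construct $t$ generator by generator using \lemref{lem-surjective-quasi-isomorphism}. The paper's proof differs only in notation (it writes $u,y,v,w,r$ where you write $f,g,k_V,k,t$) and in making the naturality-of-$\psi$ verification of $k\cdot g=k_V$ explicit as a separate diagram, which is exactly the ``bookkeeping'' you flag in your final paragraph.
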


\begin{proof}
Let the image $y^\#$ of a morphism \(y:U\to V\in\cc\) be an epimorphism and a quasi-isomorphism.
Let \(u:A\to U\in\cc\).
Morphisms \(v:A\langle M,\alpha\rangle\to V\), which make the square
\begin{diagram}[LaTeXeqno]
A &\rTTo^u &U
\\
\dTTo<{\bar\jmath} &\ruDashTo^w &\dFib~\wr>y
\\
A\langle M,\alpha\rangle &\rTTo^v &V
\label{dia-w-jv-uy}
\end{diagram}
commutative, are in bijection with elements \((A\rto u U\rto y V,M\rto t V^\#)\in h_{A,\alpha}(V)\).
Thus,
\[ (t)d =d_Mt+td_{V^\#} =\bigl(M\rto\alpha A^\#\rto{u^\#} U^\#\rto{y^\#} V^\#\bigr).
\]
For some graded set \(P=(P^a(s)\mid a\in\ZZ,\,s\in S)\), \(P^a(s)\in\Set\), we have \(M=P\kk=\bigl(\oplus_{a\in\ZZ}P^a(s)\kk[a]\bigr)_{s\in S}\).
Let us denote the chosen basis of $M$ by \((e_p)_{p\in P^\bull(\bull)}\), \(\deg e_p=\deg p\).
For an arbitrary \(p\in P^a(s)\) denote $n=a-1$.
We have a cycle \(e_p\alpha u^\#\in Z^{n+1}(U^\#)\) and an element \(e_pt\in(V^\#)^n\) such that \((e_p\alpha u^\#)y^\#=(e_pt)d_{V^\#}\).
By \lemref{lem-surjective-quasi-isomorphism} there is an element denoted \((e_pr)\in(U^\#)^n\) such that \(e_p\alpha u^\#=(e_pr)d_{U^\#}\) and \(e_pt=(e_pr)y^\#\).
Choosing such \(e_pr\) for all \(p\in P^\bull(\bull)\) we get a map \(r\in\udg^S(M,U^\#)^{-1}\) such that the triangles commute
\[
\begin{diagram}[inline]
A^\# &\rTTo^{u^\#} &U^\#
\\
\uTTo<\alpha &\ruTTo>{(r)d} &
\\
M &&
\end{diagram}
\quad,\quad
\begin{diagram}[inline]
&&U^\#
\\
&\ruTTo<r &\dTTo>{y^\#}
\\
M &\rTTo^t &V^\# 
\end{diagram}
\quad.
\]
Thus a pair \((u:A\to U,r:M\to U^\#)\in h_{A,\alpha}(U)\) determines a morphism \(w:A\langle M,\alpha\rangle\to U\in\cc\) by \lemref{lem-AM-alpha-represent-hA-alpha}.
Due to \eqref{eq-hA-alpha(k)(j-theta)} the equation
\[ u =\bigl( A \rto{\bar\jmath} A\langle M,\alpha\rangle \rto w U \bigr)
\]
holds.
Naturality of bijection $\psi$,
\begin{diagram}
h_{A,\alpha}(U) &\rTTo^{\psi_U}_\sim &\cc(A\langle M,\alpha\rangle,U)
\\
\dTTo<{h_{A,\alpha}(U)} &= &\dTTo>{\cc(1,y)}
\\
h_{A,\alpha}(V) &\rTTo^{\psi_V}_\sim &\cc(A\langle M,\alpha\rangle,V)
\end{diagram}
applied to the pair $(u,r)$ gives
\begin{diagram}
(u:A\to U,r:M\to U^\#) &\rMapsTo &w
\\
\dMapsTo<{(-\cdot y,-\cdot y^\#)} &= &\dMapsTo>{-\cdot y}
\\
(uy:A\to V,ry^\#:M\to V^\#) =(\bar\jmath v,t) &\rMapsTo &v =wy.
\end{diagram}
This gives another equation
\[ v =\bigl( A\langle M,\alpha\rangle \rto w U \rto y V \bigr)
\]
and $w$ is the sought diagonal filler for \eqref{dia-w-jv-uy}.
\end{proof}

If $M$ consists of free $\kk$\n-modules (and $d_M=0$), then \(\bar\jmath:A\to A\langle M,\alpha\rangle\) is a cofibration.
It might be called an \emph{elementary standard cofibration}.
If
\[ A \to A_1 \to A_2 \to \cdots
\]
is a sequence of elementary standard cofibrations, $B$ is a colimit of this diagram, then the ``infinite composition'' map $A\to B$ is a cofibration called a \emph{standard cofibration} \cite[Section~2.2.3]{Hinich:q-alg/9702015}.

\begin{lemma}\label{lem-alpha-alpha}
Let \(\alpha\sim\alpha':M\to A^\#\).
Then there is a natural in $B$ bijection \(h_{A,\alpha}(B)\simeq h_{A,\alpha'}(B)\).
Hence, there is an isomorphism $k$ of representing objects, which is the last arrow in the equation which holds in $\cc$:
\[ \bar\jmath' =\bigl( A \rto{\bar\jmath} A\langle M,\alpha\rangle \rto[\sim]{k} A\langle M,\alpha'\rangle \bigr).
\]
\end{lemma}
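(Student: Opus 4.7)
The plan is to promote the chain homotopy between $\alpha$ and $\alpha'$ to an explicit natural bijection between the functors $h_{A,\alpha}$ and $h_{A,\alpha'}$, and then extract $k$ via the Yoneda lemma combined with \lemref{lem-AM-alpha-represent-hA-alpha}.

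First I would fix a degree $-1$ map $s:M\to A^\#$ with $(s)d\equiv sd_{A^\#}+d_Ms=\alpha-\alpha'$ witnessing $\alpha\sim\alpha'$. For a pair $(f,t)\in h_{A,\alpha}(B)$, define $\phi_B(f,t)=(f,\,t-s\cdot f^\#)$; the new second component is again a degree $-1$ map $M\to B^\#$. Since $f^\#$ is a chain map of degree $0$, a short calculation gives
\[
(t-s\cdot f^\#)d \;=\; (t)d - (s)d\cdot f^\# \;=\; \alpha f^\# - (\alpha-\alpha')f^\# \;=\; \alpha' f^\#,
\]
so $\phi_B(f,t)\in h_{A,\alpha'}(B)$. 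The evident assignment $(f,t')\mapsto(f,t'+s\cdot f^\#)$ provides a two-sided inverse, and naturality in $B$ is transparent (the only $B$-dependence is in post-composition with $f^\#$), so $\phi$ is the sought natural bijection.

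Next I would invoke \lemref{lem-AM-alpha-represent-hA-alpha} to identify $h_{A,\alpha}$ and $h_{A,\alpha'}$ with the representable functors $\cc(A\langle M,\alpha\rangle,-)$ and $\cc(A\langle M,\alpha'\rangle,-)$. The Yoneda lemma then produces a unique isomorphism $k:A\langle M,\alpha\rangle\to A\langle M,\alpha'\rangle$ that realises $\phi^{-1}$ by precomposition. To pin it down explicitly, I would evaluate at $B=A\langle M,\alpha'\rangle$, apply $\phi^{-1}$ to the universal element $(\bar\jmath',\theta')$, and read off that $k$ is the morphism corresponding under the representability isomorphism of \lemref{lem-AM-alpha-represent-hA-alpha} to the pair $(\bar\jmath',\,\theta'+s\cdot(\bar\jmath')^\#)\in h_{A,\alpha}(A\langle M,\alpha'\rangle)$. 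By formula~\eqref{eq-hA-alpha(k)(j-theta)} the first component of that pair equals $\bar\jmath\cdot k$, yielding $\bar\jmath\cdot k=\bar\jmath'$ as asserted.

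I do not anticipate a serious obstacle: the substance lies entirely in the one-line chain-homotopy verification that $\phi_B$ lands in $h_{A,\alpha'}(B)$, after which everything is formal Yoneda bookkeeping. The only detail requiring care is the order of composition in the paper's right-operator convention and the direction of the Yoneda correspondence, which forces the use of $\phi^{-1}$ (rather than $\phi$) in order to produce a morphism with source $A\langle M,\alpha\rangle$ and target $A\langle M,\alpha'\rangle$.
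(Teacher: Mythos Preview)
Your argument is correct and essentially identical to the paper's proof: both fix a homotopy $s$ (the paper calls it $h$), define the bijection $h_{A,\alpha}(B)\to h_{A,\alpha'}(B)$ by $(f,t)\mapsto(f,t-s f^\#)$ with inverse $(f,q)\mapsto(f,q+s f^\#)$, verify the one-line boundary computation, and then apply Yoneda via \lemref{lem-AM-alpha-represent-hA-alpha} at $B=A\langle M,\alpha'\rangle$ to read off $k$ and the equation $\bar\jmath'=\bar\jmath\cdot k$ from the first component. Your explicit remark about needing $\phi^{-1}$ to get the correct source and target of $k$ matches the paper's choice of direction in its commutative square.
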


\begin{proof}
Let \(h\in\udg^S(M,A^\#)^{-1}\) be a homotopy, \(\alpha-\alpha'=hd+dh:M\to A^\#\).
Then we have well defined maps
\begin{gather*}
h_{A,\alpha}(B) =\bigl\{ (f:A\to B,t:M\to B^\#) \mid (t)d=\alpha f^\# \bigr\}
\\
\begin{diagram}[w=4em]
(f,t) &&(f,q+hf^\#)
\\
\dMapsTo &&\uMapsTo
\\
(f,t-hf^\#) &&(f,q)
\end{diagram}
\\
h_{A,\alpha'}(B) =\bigl\{ (f:A\to B,q:M\to B^\#) \mid (q)d=\alpha' f^\# \bigr\}
\end{gather*}
since
\begin{align*}
(t-hf^\#)d =\alpha f^\# -(\alpha-\alpha')f^\# =\alpha'f^\#,
\\
(q+hf^\#)d =\alpha'f^\# +(\alpha-\alpha')f^\# =\alpha f^\#.
\end{align*}
These maps are mutually inverse and natural in $B$.

Take \(B=A\langle M,\alpha'\rangle\).
There is a commutative square of bijections
\begin{diagram}[w=6.4em,tight]
\cc(A\langle M,\alpha'\rangle,A\langle M,\alpha'\rangle) &\rTTo^{\cc(k,1)}_\sim &\cc(A\langle M,\alpha\rangle,A\langle M,\alpha'\rangle)
\\
\dTTo<\psi>\wr &&\dTTo<\wr>\psi
\\
h_{A,\alpha'}(A\langle M,\alpha'\rangle) &\rTTo^\sim &h_{A,\alpha}(A\langle M,\alpha'\rangle)
\end{diagram}
which gives the equation
\begin{diagram}[w=3.2em,tight]
1_B &&\rMapsTo &&k
\\
\dMapsTo &&&&\dMapsTo
\\
(\bar\jmath',t') &\rMapsTo &(\bar\jmath',t'+h\bar\jmath'^\#) &\rEq &(\bar\jmath k,tk^\#).
\end{diagram}
In particular, \(\bar\jmath'=\bar\jmath k\).
\end{proof}

\begin{remark}\label{rem-pushout-pushout-A<M-alpha>}
Consider a diagram \(\alpha'=\bigl( M' \rto\beta M'' \rto{\alpha''} A^\# \bigr)\) in $\dg^S$.
These morphisms lead to natural transformation \(h_{A,\alpha''}(B)\to h_{A,\alpha'}(B)\), \((f,t)\mapsto(f,\beta\cdot t)\), or equivalently \(\cc(A\langle M'',\alpha''\rangle,B)\to\cc(A\langle M',\alpha'\rangle,B)\), which comes from a unique morphism \(A\langle\beta\rangle:A\langle M',\beta\cdot\alpha''\rangle\to A\langle M'',\alpha''\rangle\in\cc\).
It can be found from the diagram
\begin{diagram}[w=4em,LaTeXeqno]
&&F(A^\#) &\rTTo^\eps &A
\\
&\ldTTo(2,4)<{F\bar\imath''} &\dTTo>{F\bar\imath'} &&\dTTo<{\bar\jmath'} &\rdTTo(2,4)>{\bar\jmath''}
\\
&&F(C') &\rTTo^{g'} &\NWpbk A\langle M',\alpha'\rangle
\\
&\ldTTo>{F\gamma} &&&&\rdTTo_{A\langle\beta\rangle}
\\
F(C'') &&&\rTTo^{g''} &&&A\langle M'',\alpha''\rangle
\label{dia-pushout-pushout-A<M-alpha>}
\end{diagram}
where \(\gamma=\Cone(\beta,1):C'\to C''\) is the morphism of cones, induced by $\beta$.

In fact, put \(B=A\langle M'',\alpha''\rangle\).
The unit morphism $1_B$ corresponds to \((\bar\jmath'',\theta'')\in h_{A,\alpha''}(B)\) which is taken to \((\bar\jmath'',\beta\cdot\theta'')\in h_{A,\alpha'}(B)\).
The latter element has to coincide with \((\bar\jmath'\cdot A\langle\beta\rangle,\theta'\cdot A\langle\beta\rangle^\#)\).
The equation \(\bar\jmath''=\bar\jmath'\cdot A\langle\beta\rangle\) is the right triangle of \eqref{dia-pushout-pushout-A<M-alpha>}.
The equation \(\beta\cdot\theta''=\theta'\cdot A\langle\beta\rangle^\#\) can be written as the exterior of
\begin{diagram}
M' &\rTTo^\sigma &M'[1] &\rTTo^{\inj_1} &C' &\rTTo^{g'^t} &D'^\#
\\
\dTTo<\beta &&&&\dTTo<\gamma &&\dTTo>{A\langle\beta\rangle^\#}
\\
M'' &\rTTo^\sigma &M''[1] &\rTTo^{\inj_1} &C'' &\rTTo^{g''^t} &D''^\#
\end{diagram}
The mentioned right triangle implies commutativity of the exterior of
\begin{diagram}
A^\# &\rTTo^{\bar\imath'} &C' &\rTTo^{g'^t} &D'^\#
\\
\dEq &&\dTTo<\gamma &&\dTTo>{A\langle\beta\rangle^\#}
\\
A^\# &\rTTo^{\bar\imath''} &C'' &\rTTo^{g''^t} &D''^\#
\end{diagram}
This fact jointly with the previous implies commutativity of the right square, which is equivalent to lower trapezia in \eqref{dia-pushout-pushout-A<M-alpha>}.

In particular, for \(0=\bigl(0\rto0 M\rto\alpha A^\#\bigr)\) we have \(\bar\imath'=\id:A^\#\to C'\), \(\bar\jmath'=\id:A\to A\langle0,0\rangle\), \(\bar\jmath''=\bar\jmath=A\langle0\rangle:A=A\langle0,0\rangle\to A\langle M,\alpha\rangle\).
\end{remark}

\begin{remark}\label{rem-A<M0>}
For \(0:M\to A^\#\) we have that \(A\langle M,0\rangle\simeq F(M[1])\sqcup A\) and $\bar\jmath=\inj_2$ is the canonical embedding.
In fact, \(C=M[1]\oplus A^\#\) is the direct sum of complexes and \(A\langle M,0\rangle\) is found from the following diagram
\begin{diagram}
F(A^\#) &&\rTTo^\eps &&A
\\
\dTTo<{F(\inj_2)} &\rdTTo>{\inj_2} &&&\dTTo>{\inj_2}
\\
F(M[1]\sqcup A^\#) &\rTTo^\sim &F(M[1])\sqcup F(A^\#) &\rTTo^{1\sqcup\eps} &F(M[1])\sqcup\NWpbk A=A\langle M,0\rangle
\end{diagram}
\end{remark}

\begin{example}\label{exa-eta:N-FN}
Let \(N\in\Ob\dg^S\).
Take $FN$ for $A$ and \(\eta:N\to(FN)^\#\) for $\alpha$.
We claim that we can take \(F(\Cone1_N)\) for \((FN)\langle N,\eta\rangle\).
In fact,
\begin{align*}
h_{FN,\eta}(B) &=\bigl\{ (f:FN\to B,t:N\to B^\#) \mid (t)d =\eta\cdot f^\# \bigr\} =\bigl\{ (f,t) \mid (t)d =f^t \bigr\}
\\
&=\bigl\{ (f,t) \mid f=\sS{^t}((t)d) \bigr\} =\bigl\{ t\in\udg^S(N,B^\#)^{-1} \bigr\} \simeq \udg^S(N[1],B^\#)^0
\\
&\overset{(!)}\simeq \dg^S\bigl((N[1]\oplus N,d_{\Cone1_N}),B^\#\bigr) =\dg^S(\Cone1_N,B^\#) \simeq \cc(F(\Cone1_N),B).
\end{align*}
Bijection (!) is left to the reader as an exercise.
\end{example}

\begin{proposition}\label{pro-N-free-j-composition-2-elementary-standard-cofibrations}
Let \(N=P\kk\in\dg^S\) consist of free $\kk$\n-modules, $d_N=0$, and \(M=\Cone1_{N[-1]}=(N\oplus N[-1],d_{\Cone})\).
Then for any morphism \(\alpha:M\to UA\in\dg^S\) the morphism \(\bar\jmath:A\to A\langle M,\alpha\rangle\) is a standard cofibration, composition of two elementary standard cofibrations.
\end{proposition}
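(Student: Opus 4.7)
The plan is to factor $\bar\jmath$ through two elementary standard cofibrations $A \rto{\bar\jmath_1} A_1 \rto{\bar\jmath_2} A_2$ and then identify $A_2$ with $A\langle M,\alpha\rangle$ by matching universal properties via \lemref{lem-AM-alpha-represent-hA-alpha}.

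First I analyze the structure of $M$. As a graded $\kk$-module, $M = N[-1][1]\oplus N[-1] \simeq N\oplus N[-1]$, and the cone differential (written in right-action form) makes the second summand $N[-1]$ a subcomplex with zero differential, with quotient $N$ also carrying the zero differential. Let $\beta\colon N[-1]\hookrightarrow M$ denote this inclusion, and let $\tau\colon N \to N[-1]$ denote the tautological degree-$(+1)$ identification of the two summands (on chosen bases, $a_p\mapsto b_p$). Decomposing $\alpha$ via this graded splitting shows that the chain-map data of $\alpha\colon M\to A^\#$ is equivalent to a chain map $\alpha^{(2)}:=\alpha\beta\colon N[-1]\to A^\#$ together with a degree-$0$ graded map $\alpha^{(1)}\colon N\to A^\#$ satisfying $d\alpha^{(1)}=\alpha^{(2)}\tau$.

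For Step 1, set $A_1:=A\langle N[-1],\alpha^{(2)}\rangle$. Since $N[-1]$ consists of free $\kk$-modules with zero differential, this is an elementary standard cofibration with canonical unit $\bar\jmath_1\colon A\to A_1$ and canonical degree-$(-1)$ map $\theta_1\colon N[-1]\to A_1^\#$ satisfying $(\theta_1)d=\alpha^{(2)}\bar\jmath_1^\#$. For Step 2, define a degree-$0$ graded map $\gamma\colon N\to A_1^\#$ by
\[
\gamma \;=\; \alpha^{(1)}\bar\jmath_1^\# \;-\; \tau\,\theta_1.
\]
Combining $d\alpha^{(1)}=\alpha^{(2)}\tau$ with the defining identity of $\theta_1$ yields $d\gamma=0$; since $d_N=0$, this makes $\gamma$ a chain map. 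Put $A_2:=A_1\langle N,\gamma\rangle$, again an elementary standard cofibration, with unit $\bar\jmath_2\colon A_1\to A_2$.

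It remains to identify $A_2$ with $A\langle M,\alpha\rangle$. For any $B\in\Ob\cc$, two successive applications of \lemref{lem-AM-alpha-represent-hA-alpha} put $\cc(A_2,B)$ in natural bijection with triples $(f,s,r)$, where $f\colon A\to B$ is in $\cc$, $s\colon N[-1]\to B^\#$ is a degree-$(-1)$ map with $(s)d=\alpha^{(2)}f^\#$, and $r\colon N\to B^\#$ is a degree-$(-1)$ map with $(r)d=\gamma f_1^\#$, where $f_1\colon A_1\to B$ is the morphism classified by $(f,s)$ (so $\bar\jmath_1^\# f_1^\#=f^\#$ and $\theta_1 f_1^\#=s$). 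Substituting the definition of $\gamma$, the last condition rewrites as $(r)d=\alpha^{(1)}f^\#-\tau\,s$. Assembling $r$ and $s$ into a single degree-$(-1)$ map $t\colon M\to B^\#$ via the graded splitting of $M$, the two conditions combine into the single relation $(t)d=\alpha f^\#$, giving a natural bijection $\cc(A_2,B)\simeq h_{A,\alpha}(B)$. By \lemref{lem-AM-alpha-represent-hA-alpha} this identifies $(A_2,\bar\jmath_1\bar\jmath_2)$ with $(A\langle M,\alpha\rangle,\bar\jmath)$, exhibiting $\bar\jmath$ as a composite of two elementary standard cofibrations. The main bookkeeping burden is tracking degree shifts in the identifications $N[-1][1]\simeq N$ to pin down the correct formula for $\gamma$; once this is in place, all remaining verifications are direct.
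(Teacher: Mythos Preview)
Your argument is correct, but it follows a genuinely different route from the paper's.

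The paper first invokes contractibility of $M=\Cone1_{N[-1]}$ to get $\alpha\sim0$, then uses \lemref{lem-alpha-alpha} to reduce to the case $\alpha=0$. After that reduction, \remref{rem-A<M0>} gives $A\langle M,0\rangle\simeq F(M[1])\sqcup A$, and the factorization is read off from the identification $F(\Cone1_N)\simeq(FN)\langle N,\eta\rangle$ of \exaref{exa-eta:N-FN}, yielding $A\langle M,0\rangle\simeq A\langle N[-1],0\rangle\langle N,\eta\rangle$. Thus the two elementary steps in the paper are both of the ``$\alpha=0$ / free-adjoining'' flavour, with the second using the unit $\eta$.

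You instead keep $\alpha$ arbitrary and filter $M$ by the subcomplex $N[-1]$: first form $A_1=A\langle N[-1],\alpha^{(2)}\rangle$, then correct on the quotient $N$ via the chain map $\gamma=\alpha^{(1)}\bar\jmath_1^\#-\tau\theta_1$, and finally match representing objects using \lemref{lem-AM-alpha-represent-hA-alpha} twice. This is more direct: it needs neither \lemref{lem-alpha-alpha} nor \exaref{exa-eta:N-FN}, and it makes transparent the general principle that a two-step filtration of $M$ with free graded pieces yields a two-step elementary factorization of $\bar\jmath$. The trade-off is the explicit bookkeeping for $\gamma$ and the sign/degree conventions in $\tau$, which you correctly flag. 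One small notational slip: with the paper's right-action conventions your $\alpha^{(2)}:=\alpha\beta$ should read $\beta\cdot\alpha$ (restriction of $\alpha$ along $\beta$), and likewise the relation you write as $d\alpha^{(1)}=\alpha^{(2)}\tau$ is $\alpha^{(1)}d_{A^\#}=\tau\alpha^{(2)}$; these do not affect the argument.
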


\begin{proof}
The complex $M$ is contractible, hence, \(\alpha\sim0=\alpha':M\to A^\#\).
Applying \lemref{lem-alpha-alpha} we find that \(\bar\jmath=\bigl(A\rTTo^{A\langle0\rangle} A\langle M,0\rangle\rto\sim A\langle M,\alpha\rangle \bigr)\), thus it suffices to prove the claim for $\alpha=0$.

The embedding \(\inj_2:N[-1]\to M\) induces a diagram
\begin{diagram}
A &\rTTo^{\bar\jmath'} &A\langle N[-1],0\rangle &\rEq &A\langle N[-1],0\rangle\langle0,0\rangle
\\
&\rdTTo_{\bar\jmath''} &\dTTo>{A\langle\inj_2\rangle} &&\dTTo>{A\langle N[-1],0\rangle\langle0\rangle}
\\
&&A\langle M,0\rangle &\rTTo^\sim &A\langle N[-1],0\rangle\langle N,\eta\rangle
\end{diagram}
Commutativity of the triangle is contained in diagram~\eqref{dia-pushout-pushout-A<M-alpha>}.
Commutativity of the square is implied by \remref{rem-A<M0>}, which gives \(A\langle N[-1],0\rangle=FN\sqcup A\), and by the equation
\begin{diagram}
FN &\rEq &(FN)\langle0,0\rangle
\\
\dTTo<{F(\inj_2)} &= &\dTTo>{(FN)\langle0\rangle}
\\
F(\Cone1_N) &\rEq &(FN)\langle N,\eta\rangle.
\end{diagram}
The latter equation follows from \exaref{exa-eta:N-FN}.
Let us take \(B=F(\Cone1_N)\) in it and find the element of \(h_{FN,\eta}(F(\Cone1_N))\), which goes into $1_B$ under the sequence of bijections considered in the example.
Moving backwards we find elements
\begin{multline*}
1_B \mapsto \langle \eta: \Cone1_N \to (F\Cone1_N)^\# \rangle \mapsto \bigl\langle N[1] \rTTo^{\inj_1} \Cone1_N \rto\eta (F\Cone1_N)^\# \bigr\rangle
\\
\mapsto t=\bigl\langle N \rto\sigma N[1] \rTTo^{\inj_1} \Cone1_N \rto\eta (F\Cone1_N)^\# \bigr\rangle \in \udg^S(N,B^\#)^{-1}.
\end{multline*}
Computation in the proof of \lemref{lem-AM-alpha-represent-hA-alpha} give
\[ (t).d =\bigl\langle N \rTTo^{\inj_2} \Cone1_N \rto\eta (F\Cone1_N)^\# \bigr\rangle =\bigl\langle N \rto\eta (FN)^\# \rTTo^{(F\inj_2)^\#} (F\Cone1_N)^\# \bigr\rangle,
\]
hence $t$ comes from the pair \((F(\inj_2),t)\in h_{FN,\eta}(F(\Cone1_N))\).
Thus, \(\bar\jmath'':A\to A\langle M,0\rangle\) is a composition of two elementary standard cofibrations and a standard cofibration itself.
\end{proof}

\begin{proposition}\label{pro-N-dg-submodule-cycles-Cone}
Let \(r:A\to Y\in\cc\).
Denote by
\begin{align*}
N &=Z\Cone(r^\#[-1]:A^\#[-1]\to Y^\#[-1])
\\
&=\bigl\{ (u,y\sigma^{-1})\in A^\#\times Y^\#[-1] \mid ud=0,\, ur^\# -yd_{Y^\#} =0 \bigr\}
\end{align*}
the differential graded $\kk$\n-submodule of cycles of \(\Cone(r^\#[-1])\), $d_N=0$.
Denote by \(\pr_1:N\to A^\#\in\dg^S\) (resp. \(\pr_2:N\to Y^\#[-1]\in\gr^S\)) the map \((u,y\sigma^{-1})\mapsto u\) (resp. \((u,y\sigma^{-1})\mapsto y\sigma^{-1}\)).
Define \(D=A\langle N,\pr_1\rangle\).
Then
\[ \bigl( r:A\to Y, t=(N \rTTo^{\pr_2} Y^\#[-1] \rto\sigma Y^\#) \bigr)
\]
is an element of \(h_{A,\pr_1}(Y)\).
The corresponding morphism \(q:D\to Y\) satisfies \(r=\bigl(A\rto{\bar\jmath} A\langle N,\pr_1\rangle \rto q Y\bigr)\).
The composition
\[ \beta =\bigl\langle N \hookrightarrow \Cone(r^\#[-1]) \rTTo^{\Cone(\bar\jmath^\#[-1],1)} \Cone(q^\#[-1]) \bigr\rangle,\quad \Cone(\bar\jmath^\#[-1],1) = \Bigl(\begin{matrix}\bar\jmath^\# &0 \\ 0 &1\end{matrix}\Bigr),
\]
is null-homotopic, \(\beta=(\theta,0).d=(\theta,0)\cdot d_{\Cone(q^\#[-1])}\), thus, all cycles of \(\Cone(r^\#[-1])\) are taken by \(\Cone(\bar\jmath^\#[-1],1_{Y^\#[-1]})\) to boundaries in \(\Cone(q^\#[-1])\).
\end{proposition}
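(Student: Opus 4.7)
The plan is to verify three successive claims by direct computation, each relying on the preceding one and on the representability result \lemref{lem-AM-alpha-represent-hA-alpha} applied to $(A, N, \pr_1)$.

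First I would check that $(r, t) \in h_{A, \pr_1}(Y)$, that is, $(t)d = \pr_1 \cdot r^\#$. Since $d_N = 0$, this reduces to $t \cdot d_{Y^\#} = \pr_1 \cdot r^\#$. For $n = (u, y\sigma^{-1}) \in N$, the map $t = \pr_2 \cdot \sigma$ sends $n \mapsto y \in Y^\#$, while the cycle condition built into the definition of $N$ is precisely $ur^\# = yd_{Y^\#}$; comparing gives the required equality. Now \lemref{lem-AM-alpha-represent-hA-alpha} applied with $(M, \alpha) = (N, \pr_1)$ produces a unique $q \colon D \to Y$ with $\psi_Y(q) = (r, t)$. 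By formula~\eqref{eq-hA-alpha(k)(j-theta)} this yields two equations: $r = \bar\jmath \cdot q$ (first coordinate), giving the claimed factorization, and the key identity $\theta \cdot q^\# = t = \pr_2 \cdot \sigma$ (second coordinate).

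Finally I verify the null-homotopy. The map $(\theta, 0) \colon N \to \Cone(q^\#[-1])$ has degree $-1$, and since $d_N = 0$, one has $(\theta, 0).d = (\theta, 0) \cdot d_{\Cone(q^\#[-1])}$. On $n = (u, y\sigma^{-1})$, applying the cone differential to $(n\theta, 0) \in D^\# \oplus Y^\#[-1]$ produces $\bigl((n\theta) d_{D^\#},\, (n\theta) q^\# \sigma^{-1}\bigr)$. The first coordinate equals $n \pr_1 \cdot \bar\jmath^\# = u\bar\jmath^\#$ via the general relation $(\theta)d = \pr_1 \cdot \bar\jmath^\#$ from \lemref{lem-AM-alpha-represent-hA-alpha} (which encodes the fact that $(\bar\jmath, \theta) \in h_{A, \pr_1}(D)$). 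The second coordinate, using $\theta q^\# = \pr_2 \sigma$ from the previous step, becomes $n \pr_2 \sigma \sigma^{-1} = y\sigma^{-1}$. These are exactly the two components of $\beta(n) = (u\bar\jmath^\#, y\sigma^{-1})$, so $(\theta, 0).d = \beta$.

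The only delicate point should be sign/degree bookkeeping in the cone differential $d_{\Cone(q^\#[-1])}$ and in the shift $[-1]$: once the matrix form of the differential on $D^\# \oplus Y^\#[-1]$ is written out correctly, the identifications $\theta q^\# = t$ and $(\theta)d = \pr_1 \bar\jmath^\#$ do all the work, and no further idea is required.
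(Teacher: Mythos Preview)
Your proposal is correct and follows essentially the same route as the paper's proof: both verify $(r,t)\in h_{A,\pr_1}(Y)$ element-wise from the cycle condition, invoke \lemref{lem-AM-alpha-represent-hA-alpha} via \eqref{eq-hA-alpha(k)(j-theta)} to obtain $r=\bar\jmath\cdot q$ and $\theta\cdot q^\#=t$, and then compute $(\theta,0)\cdot d_{\Cone(q^\#[-1])}$ using the matrix form of the cone differential together with the two identities $(\theta)d=\pr_1\cdot\bar\jmath^\#$ and $\theta q^\#=\pr_2\cdot\sigma$ to match the components of $\beta$. The only step the paper makes explicit that you leave implicit is the remark that $\Cone(\bar\jmath^\#[-1],1)$ is a chain map, but this is routine.
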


\begin{proof}
Let us show that \((r,t)\in h_{A,\pr_1}(Y)\).
In fact, the diagram
\begin{diagram}[w=3.5em,tight]
N &\rTTo^{\pr_2} &Y^\#[-1] &\rTTo^\sigma &Y^\#
\\
\dTTo<{\pr_1} &&&&\dTTo>{d_{Y^\#}}
\\
A^\# &&\rTTo^{r^\#} &&Y^\#
\end{diagram}
commutes as the computation shows
\begin{diagram}[w=3.5em,tight]
(u,y\sigma^{-1}) &\rMapsTo &y\sigma^{-1} &\rMapsTo &y
\\
\dMapsTo &&&&\dMapsTo
\\
u &&\rMapsTo &&ur^\#=yd_{Y^\#}
\end{diagram}
The corresponding morphism \(q:D\to Y\) satisfies \((r,t)=\bigl(\bar\jmath\cdot q,N\rto\theta D^\#\rto{q^\#} Y^\#\bigr)\) by \eqref{eq-hA-alpha(k)(j-theta)}.

One can easily check that cones are related by the chain map
\[ \Cone(\bar\jmath^\#[-1],1_{Y^\#[-1]}) =\Bigl(\begin{matrix}\bar\jmath^\# &0 \\ 0 &1_{Y^\#[-1]}\end{matrix}\Bigr): \Cone((\bar\jmath^\#q^\#)[-1]) \to \Cone(q^\#[-1]).
\]
The composition $\beta$ takes \((u,y\sigma^{-1})\in N\) to \((u\bar\jmath^\#,y\sigma^{-1})\in\Cone(q^\#[-1])\).
Since $d_N=0$ the map
\[ (\theta,0).d =(\theta,0)\Bigl(\begin{matrix}d_{D^\#} &q^\#\sigma^{-1} \\ 0 &d_{Y^\#[-1]}\end{matrix}\Bigr) =(\pr_1\cdot\bar\jmath^\#,\theta q^\#\sigma^{-1}) =(\pr_1\cdot\bar\jmath^\#,t\sigma^{-1}) =(\pr_1\cdot\bar\jmath^\#,\pr_2)
\]
takes \((u,y\sigma^{-1})\) to the same \((u\bar\jmath^\#,y\sigma^{-1})\) as $\beta$.
\end{proof}

Assume hypotheses of \thmref{thm-Hinich-model-category} hold.

\begin{proposition}\label{pro-N-free-j-weak-equivalence}
Let \(N=P\kk\in\dg^S\) consist of free $\kk$\n-modules, $d_N=0$, and \(M=\Cone1_{N[-1]}\).
Then for all \(\alpha:M\to A^\#\in\dg^S\) the morphism \(\bar\jmath:A\to A\langle M,\alpha\rangle\) is in $\cw$.
\end{proposition}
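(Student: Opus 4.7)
The plan is to reduce to the hypothesis of \thmref{thm-Hinich-model-category} concerning the maps $U(\inj_2):UA\to U(F(\KK_x[p])\sqcup A)$. Since $M=\Cone1_{N[-1]}$ is contractible (as the cone of an identity), every chain map $\alpha:M\to A^\#\in\dg^S$ is null\n-homotopic; in particular $\alpha\sim 0$. \lemref{lem-alpha-alpha} then provides an isomorphism $k:A\langle M,0\rangle\to A\langle M,\alpha\rangle$ in $\cc$ with $\bar\jmath=\bar\jmath_0\cdot k$, where $\bar\jmath_0$ denotes the canonical map $A\to A\langle M,0\rangle$. Since $k$ is invertible in $\cc$, we have $\bar\jmath\in\cW$ iff $\bar\jmath_0\in\cW$, so it suffices to treat the case $\alpha=0$.

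By \remref{rem-A<M0>} the object $A\langle M,0\rangle$ is identified with $F(M[1])\sqcup A$, and under this identification $\bar\jmath_0=\inj_2$. I decompose $M[1]$ in $\dg^S$ into shifts of the $\KK_x$'s: writing $N(s)=\bigoplus_{a\in\ZZ}P^a(s)\,\kk[a]$ and using $d_N=0$, one obtains an isomorphism $M[1]\cong\Cone1_N$ in $\dg^S$; component\n-wise,
\begin{equation*}
\Cone1_{N(s)} =\bigoplus_{a\in\ZZ}P^a(s)\,\Cone(\id_{\kk[a]}) =\bigoplus_{a\in\ZZ}P^a(s)\,\Cone(\id_\kk)[a].
\end{equation*}
Since $\KK_s$ is concentrated at $s$ with value $\Cone(\id_\kk)$, this gives $M[1]\cong\bigoplus_{s\in S,\,a\in\ZZ}\bigl(\KK_s[a]\bigr)^{\oplus P^a(s)}$ in $\dg^S$. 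As a left adjoint, $F$ preserves coproducts, so $F(M[1])\simeq\coprod_{(s,a,p)\in I}F(\KK_s[a])$ in $\cc$, where $I=\{(s,a,p):s\in S,\,a\in\ZZ,\,p\in P^a(s)\}$.

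It remains to show that $\inj_2:A\to F(M[1])\sqcup A$ lies in $\cW$. For each finite subset $J\subseteq I$ set $B_J=A\sqcup\coprod_{(s,a,p)\in J}F(\KK_s[a])$. An induction on $|J|$ establishes $\inj_2:A\to B_J\in\cW$: the inductive step adjoins one more factor $F(\KK_{s_n}[a_n])$ to $B_{J'}$ with $J'=J\setminus\{(s_n,a_n,p_n)\}$, and by the hypothesis of \thmref{thm-Hinich-model-category} applied with $B_{J'}$ in place of $A$ the adjunction map $B_{J'}\to B_J$ becomes a quasi\n-isomorphism after $U$; compositions of quasi\n-isomorphisms are quasi\n-isomorphisms. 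The total coproduct $F(M[1])\sqcup A$ is the filtering colimit of the $B_J$'s over the directed poset of finite subsets $J\subseteq I$. Because $U$ preserves filtering colimits and homology in $\dg^S$ commutes with filtered colimits, $U(\inj_2)$ is the filtered colimit of quasi\n-isomorphisms $UA\to UB_J$, hence itself a quasi\n-isomorphism. Therefore $\bar\jmath_0\in\cW$, and consequently $\bar\jmath\in\cW$.

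The principal subtlety is this last reduction: the hypothesis of \thmref{thm-Hinich-model-category} concerns only one $\KK_x[p]$ at a time, so the passage to an arbitrary (possibly uncountable) coproduct appearing in $F(M[1])$ relies crucially on the preservation of filtering colimits by $U$ and on the exactness of filtered colimits in the homology of $\dg^S$.
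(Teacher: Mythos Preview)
Your proof is correct and follows essentially the same approach as the paper's: reduce to $\alpha=0$ via contractibility of $M$ and \lemref{lem-alpha-alpha}, identify $A\langle M,0\rangle$ with $F(M[1])\sqcup A$ via \remref{rem-A<M0>}, decompose $M[1]$ into a coproduct of shifted $\KK_x$'s, handle finite sub-coproducts by iterating the hypothesis of \thmref{thm-Hinich-model-category}, and pass to the full coproduct using that $U$ preserves filtering colimits together with exactness of filtered colimits in $\dg^S$. The only cosmetic difference is that the paper phrases the last step via acyclicity of the filtered colimit of cones, whereas you argue directly that a filtered colimit of quasi-isomorphisms is a quasi-isomorphism; these are equivalent.
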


\begin{proof}
The complex $M$ is contractible, hence, it suffices to consider $\alpha=0$.
Consider the directed set of finite graded subsets $Q\subset P$ (that is, the set \(\bigsqcup_{c\in\ZZ}^{x\in S}Q^c(x)\) is finite).
We have
\begin{align*}
M[1] &=P\KK[1] =\bigoplus_{c\in\ZZ}^{x\in S} P^c(x)\KK_x[c+1] =\colim_{Q\subset P} \coprod_{x\in S,\,c\in\ZZ}^{q\in Q^c(x)}\KK_x[c+1],
\\
\bar\jmath^\# &=\inj_2^\# =\bigl\langle A^\# \to (F(M[1])\ttt\coprod A)^\# \bigr\rangle
\\
&= \Bigl\langle A^\# \to \Bigl(\colim_{Q\subset P} \bigl(\coprod_{x\in S,\,c\in\ZZ}^{q\in Q^c(x)} F(\KK_x[c+1])\bigr)\coprod A\Bigr)^\# \Bigr\rangle
\\
&= \Bigl\langle A^\# \to \colim_{Q\subset P} \Bigl(\bigl(\coprod_{x\in S,\,c\in\ZZ}^{q\in Q^c(x)} F(\KK_x[c+1])\bigr)\coprod A\Bigr)^\# \Bigr\rangle.
\end{align*}
For any finite $Q$ the map \(\inj_2^\#:A^\#\to\bigl(\bigl(\coprod_{x\in S,\,c\in\ZZ}^{q\in Q^c(x)} F(\KK_x[c+1])\bigr)\coprod A\bigr)^\#\) is a quasi-\hspace{0pt}isomorphism as a finite composition of quasi-isomorphisms.
Thus its cone is acyclic. Therefore, the cone
\begin{multline*}
\Cone \Bigl\langle \bar\jmath^\#: A^\# \to \colim_{Q\subset P} \Bigl(\bigl(\coprod_{x\in S,\,c\in\ZZ}^{q\in Q^c(x)} F(\KK_x[c+1])\bigr)\coprod A\Bigr)^\# \Bigr\rangle
\\
\simeq \colim_{Q\subset P} \Cone \Bigl\langle A^\# \to \Bigl(\bigl(\coprod_{x\in S,\,c\in\ZZ}^{q\in Q^c(x)} F(\KK_x[c+1])\bigr)\coprod A\Bigr)^\# \Bigr\rangle
\end{multline*}
is acyclic and \(\bar\jmath^\#\) is a quasi-isomorphism.
\end{proof}

To sum up Propositions \ref{pro-N-free-j-composition-2-elementary-standard-cofibrations} and \ref{pro-N-free-j-weak-equivalence} assume that \(N\in\Ob\dg^S\) consists of free $\kk$\n-modules, $d_N=0$, and \(M=\Cone1_{N[-1]}=(N\oplus N[-1],d_{\Cone})\).
Then for any morphism \(\alpha:M\to UA\in\dg^S\) the morphism \(\bar\jmath:A\to A\langle M,\alpha\rangle\) is a trivial cofibration in $\cc$ and a standard cofibration, composition of two elementary standard cofibrations.
It is called a \emph{standard trivial cofibration}.

\begin{proof}[Proof of \thmref{thm-Hinich-model-category}]
(MC1) (Co)completeness of $\cc$ is assumed.
Axioms (MC2) (three-for-two for $\cw$) and (MC3) (closedness of $\cL_c$, $\cw$, $\cR_f$ with respect to retracts) are obvious.
The class $\cL_c$ is \(\sS{^\perp}(\cW\cap\cR_f)\) by definition.

\subsubsection*{\textup{(MC5)(ii)} Functorial factorization into a trivial cofibration and a fibration.}
Let \(f:X\to Y\in\cc\).
Denote \(N=Y^\#\kk\), \(M[1]=\Cone1_{N[-1]}=(N\oplus N[-1],d_{\Cone})\simeq Y^\#\KK[-1]\).
The $\kk$\n-linear degree 0 map \(N\to Y^\#\), \(e_y\mapsto y\), extends in a unique way to a degreewise surjection \(\pi_Y^t:M[1]\to Y^\#\in\dg^S\), which determines a morphism \(\pi_Y:F(M[1])\to Y\in\cc\).
Combining it with the previous we get a morphism \(\pi_Y\cup f:F(M[1])\coprod X\to Y\in\cc\).
Since \(\pi_Y^t=\bigl\langle M[1]\rto\eta (F(M[1]))^\#\rto{\pi_Y^\#} Y^\#\bigr\rangle\) is a surjection, the map \(\pi_Y^\#=\bigl\langle(F(M[1]))^\#\rTTo^{\inj_1^\#} (F(M[1])\coprod X)^\#\rTTo^{(\pi_Y\cup f)^\#} Y^\#\bigr\rangle\) is a surjection as well.
Therefore, \((\pi_Y\cup f)^\#\) is a surjection and \(\pi_Y\cup f\in\cR_f\).
The decomposition
\[ f =\bigl( X \rto{\bar\jmath} X\langle M,0\rangle =F(M[1])\ttt\coprod X \rTTo^{(\pi_Y\cup f)^\#} Y\bigr)
\]
into a trivial cofibration and a fibration is functorial in $f$.

\subsubsection*{\textup{(MC5)(i)} Functorial factorization into a cofibration and a trivial fibration.}
Let us construct inductively the following diagram in $\cc$
\begin{diagram}[w=2em,LaTeXeqno]
X &\rEq &D_0 &\rTTo^{h_0} &D_1 &\rTTo^{h_1} &D_2 &\rTTo^{h_2} &\dots
\\
&&&\rdTTo(4,2)_{f=q_0} &&\rdTTo^{q_1} &\dTTo>{q_2} &\dots
\\
&&&&&&Y
\label{dia-XDDD-Y}
\end{diagram}
so that all $h_i$ were cofibrations.
Given $q_n$ for $n\ge0$ denote
\begin{align*}
N_n &=Z\Cone(q_n^\#[-1]:D_n^\#[-1]\to Y^\#[-1])
\\
&=\bigl\{ (u,y\sigma^{-1})\in D_n^\#\times Y^\#[-1] \mid ud=0,\, uq_n^\# -yd_{Y^\#} =0 \bigr\}
\end{align*}
as in \propref{pro-N-dg-submodule-cycles-Cone}.
Being a subset of cycles $N_n$ is a graded $\kk$\n-submodule with $d_{N_n}=0$.
Viewing $N_n$ as a graded set introduce a graded $\kk$\n-module \(M_n=N_n\kk\), $d_{M_n}=0$, with the projection \(p_n:M_n\rEpi N_n\in\dg^S\), \(e_v\mapsto v\) for all \(v\in N_n^\bull(\bulle)\).
Let us denote \(\alpha_n=\bigl(M_n\rto{p_n} N_n\rTTo^{\pr_1} D_n^\#\bigr)\in\dg^S\).
Choose \(D_{n+1}=D_n\langle M_n,\alpha_n\rangle\), then \(h_n=D_n\langle0\rangle:D_n\to D_{n+1}\) is a cofibration.
\propref{pro-N-dg-submodule-cycles-Cone} and \remref{rem-pushout-pushout-A<M-alpha>} imply that \((q_n:D_n\to Y,t_n=\bigl(M_n\rto{p_n} N_n\rTTo^{\pr_2} Y^\#[-1]\rto\sigma Y^\#\bigr)\) is an element of \(h_{D_n,\alpha_n}(Y)\).
A morphism \(q_{n+1}:D_{n+1}=D_n\langle M_n,\alpha_n\rangle\to Y\in\cc\) corresponds to the pair \((q_n,t_n)\) such that \(q_n=\bigl(D_n\rto{h_n} D_{n+1}\rTTo^{q_{n+1}} Y\bigr)\) in $\cc$, which gives the required diagram.

Let us prove that \(q_2^\#:D_2^\#\to Y^\#\) is surjective in all degrees.
Let \(y\in Y^{\#\bull}(\bulle)\).
Then \((0,yd\sigma^{-1})\in N_0\), \(e_{(0,yd\sigma^{-1})}\in M_0\), \(e_{(0,yd\sigma^{-1})}\theta_0\in D_1^\#\).
The equation \(\theta_0q_1^\#=t_0=p_0\cdot\pr_2\cdot\sigma:M_0\to Y^\#\) implies that
\[ e_{(0,yd\sigma^{-1})}\theta_0q_1^\# -yd_{Y^\#} =(0,yd\sigma^{-1})\pr_2\sigma -yd =0.
\]
Furthermore,
\[ e_{(0,yd\sigma^{-1})}\theta_0d_{D_1^\#} =e_{(0,yd\sigma^{-1})}.(\theta)d =e_{(0,yd\sigma^{-1})}\alpha_0\bar\imath_0\eta g_0^\# =(0,yd\sigma^{-1})\pr_1\alpha_0\bar\imath_0\eta g_0^\# =0.
\]
Thus, \((e_{(0,yd\sigma^{-1})}\theta_0,y\sigma^{-1})\in N_1\).
Therefore the map \(\pr_2\cdot\sigma:N_1\to Y^\#\) is surjective in each degree.
Hence, the map \(t_1=\bigl(M_1\rEpi^{p_1} N_1\rTTo^{\pr_2} Y^\#[-1]\rto\sigma Y^\#\bigr)\) is surjective as well.
Since \(t_1=\bigl(M_1\rto{\theta_1} D_2^\#\rto{q_2^\#} Y^\#\bigr)\), it follows that $q_2^\#$ is surjective in each degree.
Consequently \(q_n^\#:D_n^\#\to Y^\#\) is surjective for all $n\ge2$, and the induced map \(q^\#:D^\#\to Y^\#\) is surjective as well, where
\[ q =\colim_{n\in\NN} q_n: D =\colim_{n\in\NN} D_n \to Y.
\]
Diagram~\eqref{dia-XDDD-Y} induces also diagram of cones
\[ \Cone q_0^\# \rTTo^{\Cone(h_0^\#,1)} \Cone q_1^\# \rTTo^{\Cone(h_1^\#,1)} \Cone q_2^\# \to \dots \to \Cone q^\# =\colim_{n\in\NN} \Cone q_n^\#.
\]
It follows from \propref{pro-N-dg-submodule-cycles-Cone} that the submodule of cycles \(Z\Cone q_n^\#\) is taken by \(\Cone(h_n^\#,1)\) to the submodule of boundaries \(B\Cone q_{n+1}^\#\).
Thus the colimit of cones \(\Cone q^\#\) is acyclic.
Therefore, $q^\#$ is a quasi-isomorphism.
We have decomposed a morphism $f\in\cc$ into a standard cofibration $i$ and a trivial fibration $q$: \(f=\bigl(X\rto i D\rto q Y\bigr)\).

\subsubsection*{\textup{(MC4)(ii)}.} 
Let us prove that a standard trivial cofibration \(\bar\jmath:X\to X\langle M,0\rangle\) is in \(\sS{^\perp}\cR_f\).
Here \(M[1]=\Cone1_{N[-1]}\) and $N$ consists of free $\kk$\n-modules.
We have \(X\langle M,0\rangle= F(M[1])\sqcup X\).
A square
\begin{diagram}
X &\rTTo^a &A
\\
\dTTo<{\bar\jmath} &\ruDashTo^c &\dFib>g
\\
X\langle M,0\rangle &\rTTo^b &B
\end{diagram}
commutes iff \(b=l\cup ag:F(M[1])\sqcup X\to B\).
The adjunction takes $l$ to \(l^t:M[1]\to B^\#\in\dg^S\).
There is a commutative diagram in $\Set$
\begin{diagram}[LaTeXeqno]
\dg^S(M[1],A^\#) &\rTTo^\sim &\udg^S(N,A^\#)^0
\\
\dTTo<{\dg^S(1,g^\#)} &&\dTTo>{\udg^S(1,g^\#)}
\\
\dg^S(M[1],B^\#) &\rTTo^\sim &\udg^S(N,B^\#)^0
\label{dia-dg-udg-dg-udg}
\end{diagram}

Assume that $g\in\cR_f$, that is, $g^\#$ is surjective in each degree.
Since $N$ consists of free $\kk$\n-modules, the vertical maps are surjections.
Thus, there is a chain map \(r:M[1]\to A^\#\) such that \(l^t=r\cdot g^\#\).
Using adjunction we find that \(l=\bigl(F(M[1])\rto{\sS{^t}r} A\rto{g} B\bigr)\).
Then \(c=\sS{^t}r\cup a:F(M[1])\sqcup X\to A\) is the sought diagonal filler.

Denote by $J$ the class of all standard trivial cofibrations.
Then the above reasoning turned backward shows that for $g\in J^\perp$ vertical arrows of \eqref{dia-dg-udg-dg-udg} are always surjective which implies that $g\in\cR_f$.
Hence, $J^\perp=\cR_f$.

Consider an arbitrary morphism \(f:X\to Y\in\cc\).
Accordingly to proven (MC5)(ii) there is a decomposition
\begin{diagram}
X &\rTTo^{\bar\jmath} &Z=X\langle M,0\rangle
\\
\dTTo<f &&\dFib>p
\\
Y &\rEq &Y
\end{diagram}
into a standard trivial cofibration $\bar\jmath$ and \(p\in\cR_f\).
If \(f\in\cw\cap\cL_c\), then \(p\in\cw\cap\cR_f\).
By definition of \(\cL_c=\sS{^\perp}(\cW\cap\cR_f)\) there is a morphism $w$ such that the following diagrams commute
\begin{equation}
\begin{diagram}[inline]
X &\rTTo^{\bar\jmath} &Z
\\
\dTTo<f &\ruTTo^w &\dTTo>p
\\
Y &\rEq &Y
\end{diagram}
\quad \Longleftrightarrow \quad
\begin{diagram}[inline]
X &\rEq &X &\rEq &X
\\
\dTTo<f &&\dTTo<{\bar\jmath} &&\dTTo>f
\\
Y &\rTTo^w &Z &\rTTo^p &Y
\end{diagram}
\label{eq-dia-fjf-wp1}
\end{equation}
and \(w\cdot p=1_Y\).
That is, $f$ is a retract of $\bar\jmath$.
Hence, \((\cW\cap\cR_f)^\perp=J^\perp=\cR_f\).
\end{proof}

\begin{remark}
It is shown in the proof that any trivial cofibration $f$ is a retract of a standard trivial cofibration $\bar\jmath$ of type \eqref{eq-dia-fjf-wp1}, \textit{cf.} \cite[Remark~2.2.5]{Hinich:q-alg/9702015}.
Similarly, any cofibration $f$ is a retract of a standard cofibration $\bar\jmath$ of type \eqref{eq-dia-fjf-wp1}.
The model structure of $\cc$ is cofibrantly generated by the classes of elementary cofibrations and of standard trivial cofibrations.
\end{remark}


\begin{thebibliography}{Mur11}

\bibitem[BM03]{math.AT/0206094}
Clemens Berger and Ieke Moerdijk, \emph{Axiomatic homotopy theory for operads},
  Comment. Math. Helv. \textbf{78} (2003), no.~4, 805--831,
  \href{http://arXiv.org/abs/math.AT/0206094}{{\tt
  arXiv:\linebreak[1]math.AT/0206094}}.

\bibitem[CH02]{math.KT/0011216}
J.~Daniel Christensen and Mark Hovey, \emph{Quillen model structures for
  relative homological algebra}, Math. Proc. Cambridge Philos. Soc.
  \textbf{133} (2002), no.~2, 261--293,
  \href{http://arXiv.org/abs/math.KT/0011216}{{\tt
  arXiv:\linebreak[1]math.KT/0011216}}.

\bibitem[Har10]{MR2593672}
John~Edward Harper, \emph{Homotopy theory of modules over operads and
  non-{$\Sigma$} operads in monoidal model categories}, J. Pure Appl. Algebra
  \textbf{214} (2010), no.~8, 1407--1434,
  \href{http://arXiv.org/abs/0801.0191}{{\tt arXiv:\linebreak[1]0801.0191}}.

\bibitem[Hin97]{Hinich:q-alg/9702015}
Vladimir Hinich, \emph{Homological algebra of homotopy algebras}, Comm. Algebra
  \textbf{25} (1997), no.~10, 3291--3323,
  \href{http://arXiv.org/abs/q-alg/9702015}{{\tt
  arXiv:\linebreak[1]q-alg/9702015}}.

\bibitem[Lyu11]{Lyu-Ainf-Operad}
Volodymyr Lyubashenko, \emph{Homotopy unital ${A}_\infty$-algebras}, J. Algebra \textbf{329}
  (2011), no.~1, 190--212,
  \url{http://dx.doi.org/10.1016/j.jalgebra.2010.02.009} Special Issue
  Celebrating the 60th Birthday of Corrado De Concini
  \href{http://arXiv.org/abs/1205.6058}{{\tt arXiv:\linebreak[1]1205.6058}}.

\bibitem[Lyu12]{Lyu-A8-several-entries}
Volodymyr Lyubashenko, \emph{${A}_\infty$-morphisms with several entries},
 may 2012, \href{http://arXiv.org/abs/1205.6072}{{\tt arXiv:\linebreak[1]1205.6072}}.

\bibitem[Mur11]{MR2821434}
Fernando Muro, \emph{Homotopy theory of nonsymmetric operads}, Algebr. Geom.
  Topol. \textbf{11} (2011), no.~3, 1541--1599,
  \href{http://arXiv.org/abs/1101.1634}{{\tt arXiv:\linebreak[1]1101.1634}}
  \url{http://dx.doi.org/10.2140/agt.2011.11.1541}.

\bibitem[Spi01]{math/0101102}
Markus Spitzweck, \emph{Operads, algebras and modules in general model
  categories}, jan 2001, \href{http://arXiv.org/abs/math/0101102}{{\tt
  arXiv:\linebreak[1]math/0101102}}.

\bibitem[SS00]{math.AT/9801082}
Stefan Schwede and Brooke Shipley, \emph{Algebras and modules in monoidal model
  categories}, Proc. London Math. Soc. (3) \textbf{80} (2000), no.~2, 491--511,
  \href{http://arXiv.org/abs/math.AT/9801082}{{\tt
  arXiv:\linebreak[1]math.AT/9801082}}.

\end{thebibliography}
\ifx\chooseClass1
	\else
\tableofcontents
\fi

\end{document}